\newcommand{\bvs}{\mathbf{\varsigma}}
\newcommand{\vs}{\varsigma}
\begin{document}
\input xy
\xyoption{all}

\newtheorem{innercustomthm}{{\bf Main~Theorem}}
\newenvironment{customthm}[1]
  {\renewcommand\theinnercustomthm{#1}\innercustomthm}
  {\endinnercustomthm}

  \def\haTh{\widehat{\Theta}}
\def \haH{\widehat{H}}
\newcommand{\bB}{{\mathbf B }}
\newcommand{\bDel}{\boldsymbol{\Delta}}
\newcommand{\bBKH}{\acute{\mathbf H}}
\newcommand{\bH}{\mathbf H}
\newcommand{\BKH}{\acute{H}}
\def \tM{\mathcal{M}\widetilde{\ch}}
\def \E{K}
\def\tTT{\mathrm T}
\def \hath{\widehat{\theta}}
\def \bt{\mathbf t}
\def \bn{\mathbf n}
\def \bh{\mathbf h}
\newcommand{\cc}{{\mathcal C}}
\def \bC{\mathbf C}
\def \dB{\Theta}
\def \bpi{\boldsymbol{\pi}}
\def \dt{{\dot \theta}}
\def \dT{{\dot \Theta}}
\def \co{\mathcal O}
\def \bJ{{\mathbf J}}
\def \ch{{\mathcal H}}
\def \cm{{\mathcal M}}
\def \ct{{\mathcal T}}
\def \bS{\mathbf S}
\def \Uto{\U_{\mathrm{tor}}}
\def \tMH{{\cm\widetilde{\ch}(\Lambda^\imath)}}
\def \tMHl{{\cm\widetilde{\ch}(\bs_\ell\Lambda^\imath)}}
\def \haB{\widehat{B}}
\newcommand{\tMHX}{{}^\imath\widetilde{\ch}(\X_\bfk)}
\newcommand{\tCMH}{{}^\imath\widetilde{\cc}(\bfk Q)}
\newcommand{\tCMHX}{{}^\imath\widetilde{\cc}(\X_\bfk)}
\renewcommand{\mod}{\operatorname{mod}\nolimits}
\newcommand{\tCMHC}{{}^\imath\widetilde{\cc}(\bfk C_n)}
\newcommand{\tCHX}{\widetilde{\cc}(\X_\bfk)}

\numberwithin{equation}{section}

\renewcommand{\ker}{\operatorname{Ker}\nolimits}

\def \tH{{\widetilde{H}}}
\def \tTH{\widetilde{\Theta}}
\def \cp{\mathcal P}
\newcommand{\Hom}{\operatorname{Hom}\nolimits}
\newcommand{\RHom}{\operatorname{RHom}\nolimits}

\newcommand{\Aut}{\operatorname{Aut}\nolimits}
\newcommand{\Id}{\operatorname{Id}\nolimits}
\newcommand{\dimv}{\operatorname{\underline{dim}}\nolimits}
\newcommand{\Sym}{\operatorname{Sym}\nolimits}
\newcommand{\Ext}{\operatorname{Ext}\nolimits}
\newcommand{\Fac}{\operatorname{Fac}\nolimits}
\newcommand{\add}{\operatorname{add}\nolimits}

\def \y{{B}}
\def \haB{\widehat{B}}

\def \bTH { \boldsymbol{\Theta}}
\def \bDel{ \boldsymbol{\Delta}}

\newcommand{\mbf}{\mathbf}
\newcommand{\mbb}{\mathbb}
\newcommand{\mrm}{\mathrm}
\newcommand{\A}{\mathcal A}
\newcommand{\cbinom}[2]{\left\{ \begin{matrix} #1\\#2 \end{matrix} \right\}}
\newcommand{\dvev}[1]{{B_1|}_{\ev}^{{(#1)}}}
\newcommand{\dv}[1]{{B_1|}_{\odd}^{{(#1)}}}
\newcommand{\dvd}[1]{t_{\odd}^{{(#1)}}}
\newcommand{\dvp}[1]{t_{\ev}^{{(#1)}}}
\newcommand{\ev}{\bar{0}}
\newcommand{\End}{\mrm{End}}
\newcommand{\rank}{\mrm{rank}}
\newcommand{\de}{\delta}
\def \C{{\mathbb C}}
\newcommand{\kk}{\widetilde{\mathbf{K}}}
\newcommand{\la}{\lambda}
\newcommand{\LR}[2]{\left\llbracket \begin{matrix} #1\\#2 \end{matrix} \right\rrbracket}
\newcommand{\N}{\mathbb N}
\newcommand{\bbZ}{\mathbb Z}
\newcommand{\odd}{\bar{1}}
\newcommand{\one}{\mathbf 1}
\newcommand{\ov}{\overline}
\newcommand{\qbinom}[2]{\begin{bmatrix} #1\\#2 \end{bmatrix} }
\newcommand{\Q}{\mathbb Q}
\newcommand{\sll}{\mathfrak{sl}}
\newcommand{\T}{\texttt{\rm T}}
\newcommand{\ttt}{\mathfrak{t}}
\newcommand{\U}{\mbf U}
\newcommand{\K}{\mathbb K}
\newcommand{\F}{\mathbb F}
\newcommand{\bi}{\imath}
\newcommand{\bs}{\mathbf s}
\newcommand{\arxiv}[1]{\href{http://arxiv.org/abs/#1}{\tt arXiv:\nolinkurl{#1}}}
\newcommand{\Udot}{\dot{\mbf U}}
\newcommand{\UA}{{}_\A{\mbf U}}
\newcommand{\UAdot}{{}_\A{\dot{\mbf U}}}
\newcommand{\Ui}{{\mbf U}^\imath}
\newcommand{\Uj}{{\mbf U}^\jmath}
\newcommand{\vev}{v^+_{2\la} }
\newcommand{\vodd}{v^+_{2\la+1} }
\newcommand{\Y}{\bB}
\newcommand{\Z}{\mathbb Z}
\newcommand{\Pa}{\operatorname{Pa}\nolimits}
\newcommand{\TT}{\mathbf T}
\newcommand{\B}{\mbf V}
\newcommand{\D}{\mbf D}
\newcommand{\BA}{{}_\A{\B}}
\newcommand{\DA}{{}_\A{\B}'}
\def \X{\mathbb X}
\def \fg{\mathfrak{g}}
\def \bU{{\mathbf U}}
\newcommand{\tK}{\tilde{k}}
\def \I{\mathbb{I}}
\def \bv{v}
\def \cv{\mathcal V}
\def \cu{\mathcal U}
\def \LaC{\Lambda_{\texttt{Can}}}
\newcommand{\tUiD}{{}^{\text{Dr}}\tUi}
\def \nua{a}
\def \hn{\widehat{\mathfrak{n}}}

\newcommand{\UU}{{\mathbf U}\otimes {\mathbf U}}
\newcommand{\UUi}{(\UU)^\imath}
\newcommand{\tUU}{{\tU}\otimes {\tU}}
\newcommand{\tUUi}{(\tUU)^\imath}
\newcommand{\tUi}{\widetilde{{\mathbf U}}^\imath}
\newcommand{\sqq}{{\bf v}}
\newcommand{\sqvs}{\sqrt{\vs}}
\newcommand{\dbl}{\operatorname{dbl}\nolimits}
\newcommand{\swa}{\operatorname{swap}\nolimits}
\newcommand{\Gp}{\operatorname{Gp}\nolimits}
\newcommand{\coker}{\operatorname{Coker}\nolimits}

\newcommand{\tU}{\widetilde{\mathbf U}}

\def \btau{{{\tau}}}
\newcommand{\tk}{\Bbbk}

\def \ff{B}
\def \cJ{\mathcal{J}}

\def \fn{\mathfrak{n}}
\def \fh{\mathfrak{h}}
\def \fu{\mathfrak{u}}
\def \fv{\mathfrak{v}}
\def \fa{\mathfrak{a}}
\def \fk{\mathfrak{k}}

\def \cl{L}

\def \tf{\widetilde{f}}

\def \K{\mathbb{K}}
\def \R{\mathbb{R}}
\def \SS{\mathbb{S}}

\def \BF{\digamma}
\def \BG{\mathbb G}
\def \tMHL{{}^{\imath}\widetilde{\ch}(\LaC^{\imath,op})}
\def \cc{\mathcal C}
\def\ca{\mathcal A}
\newcommand{\Iso}{\operatorname{Iso}\nolimits}
\renewcommand{\Im}{\operatorname{Im}\nolimits}
\newcommand{\res}{\operatorname{res}\nolimits}
\newcommand{\iH}{{}^\imath\widetilde{\ch}}
\newcommand{\Mod}{\operatorname{Mod}\nolimits}
\newcommand{\coh}{\operatorname{coh}\nolimits}
\newcommand{\rep}{\operatorname{rep}\nolimits}
\newcommand{\Ker}{\operatorname{Ker}\nolimits}
\newcommand{\tUiDgr}{{\text{gr}}\tUiD}

\def \G{\mathbb G}
\def \PL{\mathbb{P}^1_{\bfk}}
\def \scrM{\mathscr M}
\def \scrf{\mathscr F}
\def \scrt{\mathscr T}
\def \P{\mathbb P}
\def \cI{\mathcal I}
\def \cJ{\mathcal J}
\def \cs{{\mathcal{S}}}
\def \ch{\mathcal H}
\def \cd{\mathcal D}
\def\bfk{\mathbf{k}}
\def \ck{\mathcal K}
\def \bp{\mathbf p}
\def \ul{\underline}
\def \fp{\mathfrak p}
\def \QJ{Q_{\texttt{J}}}
\def \II{\I_0}
\def \Lg{L\fg}

\def \bvs{{\boldsymbol{\varsigma}}}
\newcommand{\ci}{{\I}_{\btau}}
\def \cn{\mathcal N}

\def\tor{{\rm tor}}
\newcommand{\calc}{{\mathcal C}}
\newcommand{\haT}{\widehat{\Theta}}

\def \La{\Lambda}
\def \iLa{\Lambda^\imath}
\def \BH{\mathbb H}
\def \bH{\mathbf{H}}
\newcommand{\gr}{\operatorname{gr}\nolimits}
\newcommand{\wt}{\text{wt}}
\def \cR{\mathcal R}
\def \ce{\mathcal E}
\def \cb{\mathcal B}
\def \bla{\boldsymbol{\lambda}}
\def \blx{x}

\newtheorem{theorem}{Theorem}[section]
\newtheorem{acknowledgement}[theorem]{Acknowledgement}
\newtheorem{lemma}[theorem]{Lemma}
\newtheorem{proposition}[theorem]{Proposition}
\newtheorem{corollary}[theorem]{Corollary}
{\theoremstyle{remark}
\newtheorem{remark}[theorem]{Remark}
}
{\theoremstyle{definition}
\newtheorem{definition}[theorem]{Definition}
}
\newtheorem*{thm}{Theorem}
\numberwithin{equation}{section}

\title[$\imath$Hopf algebras associated with self-dual Hopf algebras]{$\imath$Hopf algebras associated with self-dual Hopf algebras}

\author[Jiayi Chen]{Jiayi Chen}
\address{ School of Mathematical Sciences, Xiamen University, Xiamen 361005, P.R. China}
\email{jiayichen.xmu@foxmail.com}

\author[Shiquan Ruan]{Shiquan Ruan}
\address{ School of Mathematical Sciences, Xiamen University, Xiamen 361005, P.R. China}
\email{sqruan@xmu.edu.cn}

\makeatletter \@namedef{subjclassname@2020}{\textup{2020} Mathematics Subject Classification} \makeatother

\subjclass[2020]{16T05, 16T20}
\keywords{Self-dual Hopf algebra; $\imath$Hopf algebra; Hall algebra; Green's formula; Taft algebra}

\begin{abstract} 
	Motivated by the construction of $\imath$Hall algebras and $\Delta$-Hall algebras, we introduce $\imath$Hopf algebras associated with  symmetrically self-dual Hopf algebras. We prove that the $\imath$Hopf algebra is an associative algebra with a unit, where the associativity relies on  an analogue of Green's formula in the framework of Hopf algebras. As an application, we construct the $\imath$Taft algebra of dimension 4, which is proved to be isomorphic to the group algebra of $\mathbb{Z}/4\mathbb{Z}$. 
\end{abstract}

\maketitle

 \setcounter{tocdepth}{2}

\section{Introduction}

 Self-dual Hopf algebras arise frequently in various areas of mathematics.  For instance, the group algebra of an abelian
 group over an algebraically closed field is a self-dual Hopf algebra. Additionally, if $A$ is a finite-dimensional Hopf algebra, the tensor product algebra $A\otimes A^*$ is self-dual. 
Among the simplest and most important classes of self-dual Hopf algebras are the Taft algebras, which are neither commutative nor cocommutative, (see \cite{T71}). 
In 2000, Green and Marcos introduced the concept of self-dual (graded) Hopf algebras and provided a significant construction method \cite{GM00}. Later, Huang, Li, and Ye studied self-dual coradically graded pointed Hopf algebras \cite{HLY}, while Ahmed and Li investigated self-dual (semilattice) graded weak Hopf algebras \cite{AL08}. Furthermore, self-dual Hopf algebras can also be defined using posets, a subject that has been deeply investigated in \cite{MR11,Fo}.
Davydov and Runkel applied self-dual Hopf algebras to construct $\mathbb{Z}/2\mathbb{Z}$-graded monoidal categories \cite{DR13}.
These algebras play a significant role in quantum groups, homology theory, and representation theory, demonstrating their broad importance in both pure and applied mathematics.

The Hall algebra approach to quantum groups  has become a prominent topic in representation theory of algebras.
In 1990, Ringel \cite{R1} constructed a Hall algebra associated with a quiver $Q$ over a finite field, and identified its generic version with the positive part of quantum groups when $Q$ is of Dynkin type.
Later Green \cite{G} generalized it to Kac-Moody setting, proving that the Hall algebra is a bialgebra and obtaining a Hopf pairing. In 1997, Xiao \cite{X97} discovered the antipode, leading to the conclusion that the Hall algebra has a self-dual Hopf algebra structure.
In recent years, Lu-Wang \cite{LW19a, LW20a} developed $\imath$Hall algebras of $\imath$quivers to realize universal quasi-split $\imath$quantum groups of Kac-Moody type.
The 
$\imath$Hall algebra provides a convenient basis consisting of isomorphism classes together with the Grothendieck group of a category and a simplified multiplication formula \cite{LW20a,CLR}.
Building on this multiplication formula,  the authors in \cite{CLinR}
 introduced the $\Delta$-Hall algebra associated with a hereditary abelian category. This algebra has the same linear space as the Ringel-Hall algebra, but with structure constants composed of three Hall numbers. The $\Delta$-Hall algebras are closely related to (semi-)derived Hall algebras and $\imath$Hall algebras, offering a new realization of $\imath$quantum groups of split type.

With the goal of realizing arbitrary 
$\imath$quantum groups, we aim to extend the construction of 
$\imath$Hall algebras and 
$\Delta$-Hall algebras within the framework of self-dual Hopf algebras. In this paper, we introduce the concept of 
$\imath$Hopf algebras associated with finite-dimensional self-dual Hopf algebras. In a forthcoming paper, we will extend this to the infinite-dimensional case and use 
$\imath$Hopf algebras to realize arbitrary 
$\imath$quantum groups.

More precisely,
let $A$ be a finite-dimensional bialgebra (Hopf algebra) with basis $\{c_i\}_{i\in I}$, where the multiplication and comultiplication are given by  the following formulas for $i,j,k\in I$:
\[
c_i\cdot c_j=\sum_{k\in I} F_{ij}^k c_k, \quad \Delta(c_k)=\sum_{i,j\in I} G_{ij}^k c_i\otimes c_j.
\]
When the structure constants satisfy $F_{ij}^k=G_{ij}^k$ for any $i,j,k\in I$, we define a new algebra structure on $A$ with multiplication 
\[
c_i\diamond c_j=\sum_{k\in I}  {}^\imath F_{ij}^k c_k, 
\]
where the structure constants
\[
  {}^\imath F^k_{ij}=\sum_{i',j',k'}F^i_{k'j'} F^j_{i'k'} F^k_{j'i'},\qquad\text{  for any  }i,j,k\in I.
 \]
The associativity of the new multiplication relies on an analogue of Green's formula in the framework of bialgebras:
 \begin{align*}
       \sum_{k}F_{ij}^k G_{k'k''}^k=\sum_{i',i'',j',j''}G_{i'i''}^iG_{j'j''}^j F_{i'j'}^{k'} F_{i''j''}^{k''}.
  \end{align*}

If the structure constants satisfy $F_{ij}^k=G_{ij}^k\cdot\dfrac{a_k}{a_ia_j}$ for any $i,j,k\in I$, where $a_i$ is a nonzero element in the field $\bfk$ for each $i\in I$, we can also obtain an algebra structure on $A$ with the modified structure constants 
\[
   {}^\imath F^k_{ij}=\sum_{i',j',k'}G^i_{k'j'} G^j_{i'k'} F^k_{j'i'}\cdot\dfrac{1}{a_{j'}},  \qquad\text{  for any  }i,j,k\in I.
   \]


The main purpose of this paper is to construct the $\imath$Hopf algebras associated with self-dual Hopf algebras. In this case, the structure constants do not behave well in general, so we need to use the base-change strategy to reach a simpler case. 
For this approach, we will require the translation formula for the structure constants under the base-change procedure.
Namely,
assume $\{d_{\tilde{i}}\}_{\tilde{i}\in I}$ is another basis of $A$ and $\widetilde{F}_{\tilde{i}\tilde{j}}^{\tilde{k}}, \widetilde{G}_{\tilde{i}\tilde{j}}^{\tilde{k}}$ are  structure constants under this basis. 
Denote the transition matrix from $\{c_i\}_{i\in I}$ to $\{d_{\tilde{i}}\}_{\tilde{i}\in I}$ by $T=(t_{i\tilde{i}})$ and its inverse by $T^{-1}=(\hat{t}_{\tilde{i}i})$. 
We obtain
\[
\widetilde{F}_{\tilde{i}\tilde{j}}^{\tilde{k}}=\sum_{i,j,k}t_{i\tilde{i}}t_{j\tilde{j}}\hat{t}_{\tilde{k}k}F_{ij}^k, \qquad\widetilde{G}_{\tilde{i}\tilde{j}}^{\tilde{k}}=\sum_{i,j,k}\hat{t}_{\tilde{i}i}\hat{t}_{\tilde{j}j}{t}_{k\tilde{k}}G_{ij}^k.
 \]
Finally, attached to a symmetrically Hopf algebra $A$ with self-dual $\varphi:A\to A^*$, we construct an $\imath$Hopf algebra $A^\imath$, whose structure constant is defined by
 \[
 {}^\imath \widetilde{F}^{\tilde{k}}_{\tilde{i}\tilde{j}}=\sum_{y_1,y_2,x,z}
    s_{y_1y_2}\widetilde{G}_{y_1x}^{\tilde{i}}\widetilde{G}_{zy_2}^{\tilde{j}}\widetilde{F}_{xz}^{\tilde{k}} ,  \qquad\text{  for any  }\tilde{i},\tilde{j},\tilde{k}\in I,
 \]
 where $(s_{ij})$ is a representation matrix of $\varphi$. As an application, we obtain the associated $\imath$Taft algebra of dimension 4, and prove that it is commutative.

The paper is organized as follows. In Section 2, we formulate the formulas of structure constants for a Hopf algebra. Two simple cases of $\imath$Hopf algebras are considered in Section 3, and we prove the associativity of $\imath$Hopf algebras in these cases. Section 4 is devoted to describing how the structure constants change under the base-change procedure, then we define the general $\imath$Hopf algebras associated with symmetrically self-dual Hopf algebras.
Section 5 is an application to the Taft algebras.

\section{Formulas on structure constants for a Hopf algebra}

Let $\bfk$ be an arbitrary field, and $A$ be a $\bfk$-linear space of finite dimension. Assume there is a basis $\{c_i\}_{i\in I}$ of $A$. If $A$ has an algebra structure $(A,\cdot,u)$, we can describe it by structure constants $F_{ij}^k$ and $\lambda_i$, that is, for any $i,j,k\in I$ we have
\begin{align}\label{algebra structure}
 c_i\cdot c_j=\sum_{k\in I} F_{ij}^k c_k,\qquad u(1)=\sum_{i}\lambda_i c_i.   
\end{align}
Similarly, if $A$ has a coalgebra structure $(A,\Delta,\varepsilon)$, we can also describe it by structure constants $G_{ij}^k$ and $\mu_i$ for $i,j,k\in I$, with the relations
\begin{align}\label{coalgebra structure}
\Delta(c_k)=\sum_{i,j\in I} G_{ij}^k c_i\otimes c_j,\qquad \varepsilon(c_i)=\mu_i.
\end{align}
The following proposition describes when $A$ is a bialgebra, via the equalities on constant structures.

\begin{proposition}
\label{green}
  Let $A$ be a vector space with an algebra structure $(A,\cdot,u)$ and a coalgebra structure $(A,\Delta,\varepsilon)$. 
  
  $(1)$ The comultiplication $\Delta$ is an algebra morphism if and only if for any $i,j,k',k''$, we have 
  \begin{align}
  \label{bialg1}
       \sum_{k}F_{ij}^k G_{k'k''}^k=\sum_{i',i'',j',j''}G_{i'i''}^iG_{j'j''}^j F_{i'j'}^{k'} F_{i''j''}^{k''};
  \end{align}

    $(2)$ The counit $\varepsilon$ is an algebra morphism if and only if for any $i,j$, the following holds
    \begin{align}
    \label{bialg2}
        \sum_{k\in I} F_{ij}^k \mu_k=\mu_i\mu_j;
    \end{align}

     $(3)$ The unit $u$ is a coalgebra morphism if and only if for any $i,j$, the following holds
    \begin{align}
    \label{bialg3}
        \sum_{k\in I} G_{ij}^k \lambda_k=\lambda_i\lambda_j.
    \end{align}
\end{proposition}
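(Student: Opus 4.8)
The plan is to unwind each of the three conditions directly from the definition of a bialgebra, which requires that $\Delta\colon A\to A\otimes A$ and $\varepsilon\colon A\to\bfk$ be algebra morphisms (equivalently, that $\cdot$ and $u$ be coalgebra morphisms). Since both sides of each asserted identity are obtained by expanding a linear map applied to basis elements and comparing coefficients, the proof is a coefficient-comparison exercise; the work is to set up the correct bookkeeping for the tensor-product algebra and coalgebra structures on $A\otimes A$.

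For part $(1)$, first I would recall that the multiplication on $A\otimes A$ is $(a\otimes b)(c\otimes d)=ac\otimes bd$, so that in the basis $\{c_i\otimes c_j\}$ its structure constants are $F^{k'}_{i'j'}F^{k''}_{i''j''}$. The condition that $\Delta$ be an algebra morphism is $\Delta(c_i\cdot c_j)=\Delta(c_i)\cdot\Delta(c_j)$ for all $i,j$. I would compute the left-hand side as $\sum_k F^k_{ij}\Delta(c_k)=\sum_{k,k',k''}F^k_{ij}G^k_{k'k''}\,c_{k'}\otimes c_{k''}$, and the right-hand side as $\big(\sum_{i',i''}G^i_{i'i''}c_{i'}\otimes c_{i''}\big)\big(\sum_{j',j''}G^j_{j'j''}c_{j'}\otimes c_{j''}\big)=\sum G^i_{i'i''}G^j_{j'j''}(c_{i'}c_{j'})\otimes(c_{i''}c_{j''})=\sum_{i',i'',j',j'',k',k''}G^i_{i'i''}G^j_{j'j''}F^{k'}_{i'j'}F^{k''}_{i''j''}\,c_{k'}\otimes c_{k''}$. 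Comparing the coefficient of $c_{k'}\otimes c_{k''}$ on both sides yields exactly \eqref{bialg1}, and since $\{c_{k'}\otimes c_{k''}\}$ is a basis this equivalence is an iff.

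For part $(2)$, the counit condition is that $\varepsilon\colon A\to\bfk$ be an algebra map, i.e. $\varepsilon(c_i\cdot c_j)=\varepsilon(c_i)\varepsilon(c_j)$; expanding the left side as $\sum_k F^k_{ij}\varepsilon(c_k)=\sum_k F^k_{ij}\mu_k$ and the right side as $\mu_i\mu_j$ gives \eqref{bialg2}. For part $(3)$, dually, $u\colon\bfk\to A$ being a coalgebra morphism means $\Delta\circ u=(u\otimes u)\circ\Delta_\bfk$ and $\varepsilon\circ u=\mathrm{id}_\bfk$; writing $u(1)=\sum_i\lambda_i c_i$ and applying $\Delta$ gives $\sum_{i,j}\big(\sum_k\lambda_k G^k_{ij}\big)c_i\otimes c_j$, while $(u\otimes u)(1\otimes 1)=\sum_{i,j}\lambda_i\lambda_j\,c_i\otimes c_j$; comparing coefficients of $c_i\otimes c_j$ yields \eqref{bialg3}. (The compatibility of $u$ with the counit, and of $\varepsilon$ with the unit, impose the normalizations $\sum_i\lambda_i\mu_i=1$, which are not recorded in the statement and need not be discussed.)

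I do not expect a genuine obstacle here: the only point requiring care is correctly identifying the algebra and coalgebra structure constants of $A\otimes A$ in the tensor basis, and tracking the index placement in the quadruple sum for part $(1)$. Once that is done, each claim is an immediate comparison of coefficients against a basis, so all three equivalences are biconditional as stated.
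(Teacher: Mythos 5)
Your proposal is correct and follows essentially the same route as the paper: expand both sides of the morphism conditions in the tensor basis and compare coefficients, with part (3) handled dually to part (2). The only additions beyond the paper's argument are the explicit remark that the equivalences are biconditional because $\{c_{k'}\otimes c_{k''}\}$ is a basis, and the aside on the normalization $\sum_i\lambda_i\mu_i=1$, both of which are harmless.
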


\begin{proof}
   $(1)$ The condition $\Delta$ is an algebra morphism gives\[
    \Delta(c_i\cdot c_j)=\Delta(c_i)\cdot \Delta(c_j), \text{ for any }i,j\in I.
    \]The left-hand side is equal to
    \begin{align*}
        \Delta(c_i\cdot c_j)=\Delta(\sum_{k}F_{ij}^k c_k)=\sum_{k}F_{ij}^k \cdot\Delta(c_k)=\sum_{k}F_{ij}^k \sum_{k',k''}G_{k'k''}^kc_{k'}\otimes c_{k''},
    \end{align*}
    while the right-hand side is equal to 
    \begin{align*}
        \Delta(c_i)\cdot \Delta(c_j)&=\sum_{i',i''} G_{i'i''}^i c_{i'}\otimes c_{i''} \cdot \sum_{j',j''} G_{j'j''}^j c_{j'}\otimes c_{j''} 
        \\&=\sum_{i',i'',j',j''}G_{i'i''}^iG_{j'j''}^j(c_{i'}\cdot c_{j'})\otimes(c_{i''}\cdot c_{j''})
        \\&=\sum_{i',i'',j',j''}G_{i'i''}^iG_{j'j''}^j(\sum_{k'}F_{i'j'}^{k'}c_{k'})\otimes(\sum_{k'}F_{i''j''}^{k''}c_{k''})
    \\&=\sum_{i',i'',j',j'',k',k''}G_{i'i''}^iG_{j'j''}^j F_{i'j'}^{k'} F_{i''j''}^{k''}\cdot c_{k'}\otimes c_{k''}.
    \end{align*}
   Then  \eqref{bialg1} follows by comparing the coefficients of $c_{k'}\otimes c_{k''}$ on both sides. 

    $(2)$ The condition $\varepsilon$ is an algebra morphism gives\[
    \varepsilon(c_i\cdot c_j)=\varepsilon(c_i)\cdot \varepsilon(c_j), \text{ for any }i,j\in I.
    \]
    The left-hand side is equal to \[
    \varepsilon(c_i\cdot c_j)=\varepsilon(\sum_{k\in I} F_{ij}^k c_k)=\sum_{k\in I} F_{ij}^k \mu_k,
    \]
    while the right-hand side  is equal to\[
    \varepsilon(c_i)\cdot \varepsilon(c_j)=\mu_i\mu_j.
    \]
    Therefore, to make  both sides equal, we must have\[
    \sum_{k\in I} F_{ij}^k \mu_k=\mu_i\mu_j.
    \]

    $(3)$ The proof is entirely similar to $(2)$.
\end{proof}

This equation \eqref{bialg1} can be viewed as a generalized version of Green's formula in the context of bialgebras. We shall understand this formula by the following diagram:
\[
    \begin{tikzpicture}
	\node (0) at (0,0) {$c_{j'}$};
	\node (1) at (2-.5,0) {$c_{k'}$};
	\node (2) at (4-1,0){$c_{i'}$};
	\node (01) at (0,2-.5) {$c_j$};
	\node (11) at (2-.5,2-.5) {$c_k$};
	\node (21) at (4-1,2-.5){$c_i$};
	\node (02) at (0,4-1) {$c_{j''}$};
	\node (12) at (2-.5,4-1) {$c_{k''}$};
	\node (22) at (4-1,4-1){$c_{i''}$};
	
	\draw[->] (01) --node[above ]{} (11);
	\draw[->] (11) --node[above ]{} (21);
	\draw[->][dashed] (12) --node[above ]{} (11);
	\draw[->][dashed] (11) --node[above ]{} (1);
	
	\draw[->] (0) --node[above ]{} (1);
	\draw[->] (1) --node[above ]{} (2);	
 \draw[->] (02) --node[above ]{} (12);
	\draw[->] (12) --node[above ]{} (22);
		\draw[<-][dashed] (0) --node[above ]{} (01);
	\draw[<-][dashed] (01) --node[above ]{} (02);
		\draw[->][dashed] (21) --node[above ]{} (2);
	\draw[->][dashed] (22) --node[above ]{} (21);	\end{tikzpicture}
    \]
 where $F_{ij}^k$ presents the row sequence with solid arrows $c_j\longrightarrow  c_k\longrightarrow c_i$, 
while $G_{k'k''}^k$ presents the (middle) column sequence with dotted arrows. 
Roughly speaking, the left-hand side of \eqref{bialg1} sums over all the crosses inside the diagram, whereas the right-hand side of \eqref{bialg1} sums over all the squares outside the diagram.

\begin{remark}
    Let $A$ be a $\bfk$-linear space with a basis $\{c_i\}_{i\in I}$ of $A$. Then the algebra structure on $A$ can be described via structure constants  as follows $(\forall i,j,k,l\in I)$:
    \begin{align}
    \label{alg1}
        \sum\limits_{s}F_{ij}^sF_{sk}^l=\sum\limits_{t}F_{it}^lF_{jk}^t:=F^l_{ijk}
    \end{align}
    \begin{align}
    \label{alg2}
        \sum_i \lambda_i F_{ij}^k=\delta_{jk},\qquad
         \sum_j \lambda_j F_{ij}^k=\delta_{ik};
    \end{align}
    and the coalgebra structure  on $A$ can be described by:
    \begin{align}
    \label{coalg1}
        \sum\limits_{s}G_{ij}^sG_{sk}^l=\sum\limits_{t}G_{it}^lG_{jk}^t:=G_{ijk}^l,
    \end{align}
    \begin{align}
    \label{coalg2}
        \sum_i \mu_i G_{ij}^k=\delta_{jk}, \qquad
         \sum_j \mu_j G_{ij}^k=\delta_{ik}.
    \end{align}
\end{remark}


\section{$\imath$Hopf algebras in some simple cases}

Motivated by the construction of $\imath$Hall algebras in \cite{LW19a} and $\Delta$-Hall algebras in \cite{CLinR}, we hope to introduce a new algebra structure, called $\imath$Hopf algebras, within the framework of Hopf algebras. 

We always assume that $A$ is a bialgebra (or Hopf algebra) with a basis $\{c_i\}_{i\in I}$ and the relations \eqref{algebra structure} and \eqref{coalgebra structure} throughout the paper. 
This section is devoted to dealing with the simple
cases where the structure constants $F_{ij}^k$ and $G_{ij}^k$ are closely related to each other, and in the next section we will consider general cases.

\subsection{The case $F_{ij}^k=G_{ij}^k$}

Firstly, let us construct a new algebra structure on the bialgebra $A$ for the most simple case, where the structure constants $F_{ij}^k$ and $G_{ij}^k$ coincide for any $i,j,k\in I$. The proof of associativity relies on the analogue of Green's formula given in \eqref{bialg1}.

\begin{theorem}
\label{thm1}
   Assume  $F_{ij}^k=G_{ij}^k$ holds in $A$ for any $i,j,k\in I$. Let $(A^\imath,\diamond)$ be a vector space with the same basis $\{c_i\}_{i\in I}$ as $A$, equipped with the multiplication $c_i\diamond c_j=\sum_{k\in I} {}^\imath F^k_{ij} c_k$, where \[
   {}^\imath F^k_{ij}=\sum_{i',j',k'}F^j_{i'k'} F^i_{k'j'} F^k_{j'i'}  \qquad\text{  for any  }i,j,k\in I.
   \] Then $(A^\imath,\diamond)$ is  an associative algebra with the unit $u(1)=\sum_{i}\lambda_i c_i$.
\end{theorem}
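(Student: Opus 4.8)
The plan is to verify the associativity of $\diamond$ directly on the structure constants, using the bialgebra identity \eqref{bialg1} (Green's formula) together with associativity \eqref{alg1}; observe that under the hypothesis $F_{ij}^k=G_{ij}^k$ the coassociativity identity \eqref{coalg1} is literally \eqref{alg1}, and in particular the iterated constants coincide, $G^l_{ijk}=F^l_{ijk}$. Associativity of $\diamond$ is the scalar identity
\[
\sum_{m} {}^\imath F^m_{ij}\, {}^\imath F^n_{ml}\;=\;\sum_{m} {}^\imath F^n_{im}\, {}^\imath F^m_{jl}
\qquad\text{for all }i,j,l,n\in I .
\]
I would begin by rewriting the defining sum as ${}^\imath F^m_{ij}=\sum_{i',j',k'}G^j_{i'k'}\,G^i_{k'j'}\,F^m_{j'i'}$, using $F=G$ on two of the three factors; this displays $k'$ as the index along which a coproduct piece of $c_i$ and a coproduct piece of $c_j$ are contracted, while the surviving pieces $c_{j'},c_{i'}$ are multiplied into $c_m$.

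Expanding the left-hand side and carrying out the sum over the internal index $m$ first produces a factor $\sum_m F^m_{j'i'}\,G^m_{p'l'}$, which is exactly the left-hand side of \eqref{bialg1}; Green's formula rewrites it as $\sum G^{j'}_{\alpha\beta}G^{i'}_{\gamma\delta}F^{p'}_{\alpha\gamma}F^{l'}_{\beta\delta}$. After this substitution one has a sum of eight structure constants over ten indices, and the next step is to contract it: each meeting of a coproduct constant with a further split collapses by \eqref{coalg1}, each meeting of two product constants collapses by \eqref{alg1}, and the one remaining contraction, where a coproduct constant meets a product constant, collapses only because $F=G$ allows the product constant to be read as a coproduct constant. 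The outcome is a normal form
\[
\sum_{m} {}^\imath F^m_{ij}\, {}^\imath F^n_{ml}\;=\;\sum_{p,q,r,a,b,c}G^i_{pqa}\,G^j_{rbp}\,G^l_{cqr}\,F^n_{abc},
\]
a contraction of the iterated coproducts of $c_i,c_j,c_l$ against an iterated product landing on $c_n$. The same computation applied to $\sum_m {}^\imath F^n_{im}\,{}^\imath F^m_{jl}$ produces the identical expression after a relabelling of the summation indices, which proves associativity. A tidier packaging of this argument is to first derive, by induction from \eqref{bialg1} together with \eqref{alg1} and \eqref{coalg1}, the rectangular Green's formula computing $\sum_k F^k_{i_1\cdots i_p}\,G^k_{j_1\cdots j_q}$ as a contraction over a $p\times q$ array of indices, and then to note that both triple products are instances of this contraction for the same array shape.

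It remains to check that $u(1)=\sum_i\lambda_i c_i$ is a two-sided unit for $\diamond$, i.e. $\sum_i\lambda_i\,{}^\imath F^k_{ij}=\delta_{jk}$ and $\sum_j\lambda_j\,{}^\imath F^k_{ij}=\delta_{ik}$. For the first, one substitutes $\sum_i\lambda_i F^i_{k'j'}=\lambda_{k'}\lambda_{j'}$ (which is \eqref{bialg3} under $F=G$) into the defining formula and then removes the two remaining factors using the two halves of \eqref{alg2}; the second is symmetric. This is a short, routine computation.

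The main obstacle I anticipate is purely organizational: tracking roughly ten summation indices through the repeated applications of Green's formula and of (co)associativity, and in particular feeding \eqref{bialg1} with its indices in the correct slots, since that identity is not symmetric between its two product rows and its two coproduct columns. Setting up the iterated and rectangular structure constants at the outset is the device that keeps this bookkeeping under control and makes the coincidence of the two normal forms evident.
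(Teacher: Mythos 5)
Your proposal is correct and follows essentially the same route as the paper's proof: expand both triple products, apply Green's formula \eqref{bialg1} to the sum over the intermediate index, collapse the result via \eqref{alg1} (equivalently \eqref{coalg1}, since $F=G$) into a six-index contraction of four iterated structure constants, and identify the two normal forms by relabelling; the unit is checked exactly as you describe, via \eqref{bialg3} and \eqref{alg2}. The only remaining work is the index bookkeeping you already flag, which the paper carries out explicitly in \eqref{long left one}--\eqref{long right one}.
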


\begin{proof}
    First, we need to prove 
    \begin{align}
        \label{asso}
        \sum\limits_{s}{}^\imath F_{jk}^s{}^\imath F_{is}^l=\sum\limits_{t}{}^\imath F_{tk}^l{}^\imath F_{ij}^t\qquad\text{  for any  }i,j,k,l \in I.
    \end{align}

    The left-hand side is equal to 
    \begin{align*}
        \sum_{s}{}^\imath F_{jk}^s{}^\imath F_{is}^l
        &=\sum_s\sum_{i',j',s'}F^k_{i'j'} F^j_{j's'} F^s_{s'i'} \sum_{s'',k,'l'}F^s_{s''k'} F^i_{k'l'} F^l_{l's''}
        \\&=\sum_{i',j',s'} \sum_{s'',k',l'} F^k_{i'j'} F^j_{j's'} F^i_{k'l'} F^l_{l's''} \sum_s F^s_{s'i'} F^s_{s''k'}.
    \end{align*}
    By Proposition \ref{green}, we have\[
    \sum_s F^s_{s'i'} F^s_{s''k'}=\sum_{s_1,s_2,s_3,s_4} F^{s'}_{s_1 s_2} F^{s''}_{s_1s_3} F^{k'}_{s_2s_4} F^{i'}_{s_3s_4}.
    \]
    Hence \begin{align}
        \sum_{s}{}^\imath F_{jk}^s{}^\imath F_{is}^l
        &=\sum_{i',j',s'} \sum_{s'',k',l'} F^k_{i'j'} F^j_{j's'} F^i_{k'l'} F^l_{l's''} \sum_{s_1,s_2,s_3,s_4} F^{s'}_{s_1 s_2} F^{s''}_{s_1s_3} F^{k'}_{s_2s_4} F^{i'}_{s_3s_4}
        \notag\\&=\sum_{j',l'}\sum_{s_1,s_2,s_3,s_4}\sum_{i'} F^k_{i'j'}F^{i'}_{s_3s_4}
        \sum_{s'}F^j_{j's'}F^{s'}_{s_1 s_2}
        \sum_{k'}F^i_{k'l'}F^{k'}_{s_2s_4}
        \sum_{s''}F^l_{l's''}F^{s''}_{s_1s_3} 
        \notag\\&=\sum_{j',l'}\sum_{s_1,s_2,s_3,s_4}F^k_{s_3s_4j'}
        F^j_{j's_1 s_2}
        F^i_{s_2s_4l'}
        F^l_{l's_1s_3}\label{long left one}. 
    \end{align}
    
    Similarly, the right-hand side of \eqref{asso} is equal to
    \begin{align}
        \sum_{t}{}^\imath F_{tk}^l{}^\imath F_{ij}^t
        &=\sum_t\sum_{i',t',l'}F^k_{i't'} F^t_{t'l'} F^l_{l'i'} \sum_{j',k',t''}F^j_{j'k'} F^i_{k't''} F^t_{t''j'}
        \notag \\&=\sum_{i',t',l'}\sum_{j',k',t''}F^k_{i't'}  F^l_{l'i'} F^j_{j'k'} F^i_{k't''} \sum_t F^t_{t'l'}F^t_{t''j'}
        \notag \\&=\sum_{i',t',l'}\sum_{j',k',t''}F^k_{i't'}  F^l_{l'i'} F^j_{j'k'} F^i_{k't''}
        \sum_{t_1,t_2,t_3,t_4} F_{t_1t_2}^{t'}  F^{t''}_{t_1t_3} F^{j'}_{t_2t_4} F^{l'}_{t_3t_4}
         \notag \\&=\sum_{i',k'}\sum_{t_1,t_2,t_3,t_4}\sum_{t'}F^k_{i't'}F_{t_1t_2}^{t'}
         \sum_{l'}F^l_{l'i'}F^{l'}_{t_3t_4}
         \sum_{j'}F^j_{j'k'}F^{j'}_{t_2t_4}
         \sum_{t''}F^i_{k't''}F^{t''}_{t_1t_3}
            \notag 
            \\ \label{long right one} &=\sum_{i',k'}\sum_{t_1,t_2,t_3,t_4}F^k_{i't_1t_2}F^l_{t_3t_4i'}
         F^j_{t_2t_4k'}F^i_{k't_1t_3}.
    \end{align}
    Note that the right-hand sides of \eqref{long left one} and \eqref{long right one} are equal when 
    identifing the variables $j',l',s_1$, $s_2$ ,$s_3$, $s_4$ with $t_2,t_3,t_4,k',i',t_1$, respectively.
    Therefore, we obtain the equation \eqref{asso}.

    It remains to prove that $u(1)$ is also the unit for $A^\imath$, which by \eqref{alg2} is equivalent to show
    \[
    \sum_j \lambda_j {}^\imath F_{ij}^k=\delta_{ik}, \qquad
         \sum_i \lambda_i {}^\imath F_{ij}^k=\delta_{jk}.
    \]
    Note that
    \begin{align*}
        \sum_j \lambda_j {}^\imath F_{ij}^k&=\sum_j \lambda_j \sum_{i',j',k'}F^j_{i'j'} F^i_{j'k'} F^k_{k'i'}
        \\&= \sum_{i',j',k'} F^i_{j'k'} F^k_{k'i'} \sum_j \lambda_j G^j_{i'j'}
        \\&= \sum_{i',j',k'} F^i_{j'k'} F^k_{k'i'} {\lambda_{i'}\lambda_{j'}},
    \end{align*}
    where we use the equation \eqref{bialg2}. Then by formula \eqref{alg2}, the above is equal to\[
     \sum_{i',j',k'} F^i_{j'k'} F^k_{k'i'} {\lambda_{i'}\lambda_{j'}}=\sum_{k'}\delta_{kk'}\delta_{ik'}=\delta_{ik}.
    \] Similarly, one can also obtain $ \sum_i \lambda_i {}^\imath F_{ij}^k=\delta_{jk}$. 
    These finish the proof.
\end{proof}

\subsection{The case $F_{ij}^k=G_{ij}^k\cdot\dfrac{a_k}{a_ia_j}$}

In a slightly more complicated case, the structure constant 
$F_{ij}^j$ is not exactly equal to $G_{ij}^j$, but is related to it by certain scalars. In this case, we will adjust the structure constants of the new multiplication.

\begin{proposition}
\label{ialgcor}
    Assume $F_{ij}^k=G_{ij}^k\cdot\dfrac{a_k}{a_ia_j}$ for any $i,j,k\in I$, where each $a_i(i\in I)$ is a nonzero element in $\mathbf{k}$. Let $(A^\imath,\diamond)$ be a vector space with the same basis $\{c_i\}_{i\in I}$ as $A$, equipped with the multiplication $c_i\diamond c_j=\sum_{k\in I} {}^\imath F^k_{ij} c_k$, where \[
   {}^\imath F^k_{ij}=\sum_{i',j',k'}G^j_{i'k'} G^i_{k'j'} F^k_{j'i'}\cdot\dfrac{1}{a_{j'}}  \qquad\text{  for any  }i,j,k\in I.
   \]
   Then $(A^\imath,\diamond)$ is also an associative algebra with the unit $u(1)$.
\end{proposition}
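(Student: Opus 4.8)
The plan is to deduce Proposition~\ref{ialgcor} from Theorem~\ref{thm1} by a diagonal rescaling of the basis -- a miniature version of the base-change strategy of Section~4 that requires essentially no new computation. First I would pass to a field extension $\bfk'\supseteq\bfk$ (for instance $\bfk'=\overline{\bfk}$) in which each $a_i$ admits a square root $\mu_i$, and set $A'=A\otimes_\bfk\bfk'$, a $\bfk'$-bialgebra with the same structure constants $F_{ij}^k,G_{ij}^k$. On $A'$ take the rescaled basis $\check c_i:=\mu_i c_i$. A one-line computation gives
\[
\check F_{ij}^k=\frac{\mu_i\mu_j}{\mu_k}\,F_{ij}^k,\qquad \check G_{ij}^k=\frac{\mu_k}{\mu_i\mu_j}\,G_{ij}^k,
\]
so that the hypothesis $F_{ij}^k=G_{ij}^k\cdot\frac{a_k}{a_ia_j}$ together with $\mu_i^2=a_i$ is \emph{exactly} the assertion $\check F_{ij}^k=\check G_{ij}^k$ for all $i,j,k$. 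Thus $A'$ with the basis $\{\check c_i\}$ falls under Theorem~\ref{thm1}.

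Next I would apply Theorem~\ref{thm1} to $A'$: the product $\check c_i\diamond\check c_j=\sum_k{}^\imath\check F_{ij}^k\,\check c_k$, with ${}^\imath\check F_{ij}^k=\sum_{i',j',k'}\check F^j_{i'k'}\check F^i_{k'j'}\check F^k_{j'i'}$, is associative with unit $\sum_i\check\lambda_i\check c_i$, where $\check\lambda_i:=\lambda_i\mu_i^{-1}$, so that $\sum_i\check\lambda_i\check c_i=\sum_i\lambda_i c_i=u(1)$. Transporting this $\diamond$-structure back to the basis $\{c_i\}$ of $A'$ multiplies the structure constants by $\frac{\mu_k}{\mu_i\mu_j}$; substituting $\check F^j_{i'k'}=\frac{\mu_{i'}\mu_{k'}}{\mu_j}F^j_{i'k'}$ and its analogues, the factors $\mu$ collapse, and rewriting two of the three $F$'s back in terms of $G$ via the hypothesis reproduces the formula for ${}^\imath F_{ij}^k$ in the statement. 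Carrying the weights $a_{j'}^{-1}$ against the $a$-factors produced by $F=G\cdot\frac{a_k}{a_ia_j}$ is the only step with any content; it is essentially forced, the chosen rescaling being the unique diagonal one landing in the hypothesis of Theorem~\ref{thm1}. The unit is unchanged: $u(1)=\sum_i\lambda_ic_i$ lies in $A$, and being a unit is independent of the chosen basis.

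Finally, since each ${}^\imath F_{ij}^k$ is a rational expression in the $a_i\in\bfk^\times$ and the $F_{ij}^k,G_{ij}^k\in\bfk$, it lies in $\bfk$; hence $A$, embedded in $A'$ via $a\mapsto a\otimes1$, is a $\diamond$-closed $\bfk$-subspace of the associative unital $\bfk'$-algebra $(A',\diamond)$ and contains the identity $u(1)$. Therefore $(A^\imath,\diamond)$ is an associative unital $\bfk$-algebra, as claimed. I would regard the main obstacle as the matching of structure constants in the second step: a direct computation mimicking the proof of Theorem~\ref{thm1} over $\bfk$ is in principle possible but is genuinely messier, since after applying Green's formula \eqref{bialg1} one meets mixed $F$--$G$ contractions weighted by the $a_i^{-1}$, for which there is no clean (co)associativity identity; passing to $\bfk'$ and rescaling is precisely what turns all the $G$'s into $F$'s and makes Theorem~\ref{thm1} directly applicable, which is why I would take that route.
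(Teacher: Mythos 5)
Your reduction to Theorem~\ref{thm1} by the diagonal rescaling $\check c_i=\mu_i c_i$ with $\mu_i^2=a_i$ (over a field extension of $\bfk$, followed by descent) is sound and is genuinely different from the paper's argument: the paper proves the proposition by redoing the computation of Theorem~\ref{thm1} from scratch, carrying the weights $a_i$ through a weighted form of Green's formula \eqref{bialg1}. Your route is shorter and is in fact the diagonal special case of the base-change strategy the paper deploys later for Theorem~\ref{thm2} (Lemma~\ref{FequalG} together with \eqref{trans1}); what the paper's direct computation buys is that it never leaves $\bfk$ and needs no descent, while yours buys conceptual clarity at the cost of a field extension. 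Your verifications that $\check F_{ij}^k=\check G_{ij}^k$ is equivalent to the hypothesis, that the unit is preserved, and that the structure constants descend to $\bfk$ are all correct.

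However, the step you declare ``essentially forced'' and do not carry out is exactly where a discrepancy hides. Transporting back, the structure constants on $\{c_i\}$ are $\frac{\mu_k}{\mu_i\mu_j}\,{}^\imath\check F^k_{ij}=\frac{1}{a_ia_j}\sum_{i',j',k'}a_{i'}a_{j'}a_{k'}F^j_{i'k'}F^i_{k'j'}F^k_{j'i'}$, and rewriting the two $F$'s carrying superscripts $i,j$ as $G$'s via the hypothesis gives
\[
{}^\imath F^k_{ij}=\sum_{i',j',k'}G^j_{i'k'}\,G^i_{k'j'}\,F^k_{j'i'}\cdot\frac{1}{a_{k'}},
\]
with the weight on the index $k'$ shared by the two $G$-factors, \emph{not} on $j'$ as in the printed statement. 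This is not a defect of your method: the $1/a_{k'}$ version is the one the paper's own proof actually manipulates (there the weight is attached to the index joining the two $G$'s), and it is what one gets by specializing Theorem~\ref{thm2} to a diagonal matrix $S=\mathrm{diag}(a_i^{-1})$, so the statement's $1/a_{j'}$ appears to be a typo. But as written your assertion that the computation ``reproduces the formula in the statement'' is false; you must perform the substitution explicitly and either prove the corrected formula or reconcile it with the printed one.
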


\begin{proof}
    We have
    \begin{align*}
        \sum_{s}{}^\imath F_{jk}^s{}^\imath F_{is}^l
        &=\sum_s\sum_{i',j',s'}G^k_{i'j'} G^j_{j's'} F^s_{s'i'}\dfrac{1}{a_{j'}} \sum_{s'',k,'l'}G^s_{s''k'} G^i_{k'l'} F^l_{l's''}\dfrac{1}{a_{k'}}
        \\&=\sum_s\sum_{i',j',s'}F^k_{i'j'} F^j_{j's'} F^s_{s'i'}\dfrac{a_{i'}a_{j'}a_{s'}}{a_ka_j} \sum_{s'',k,'l'}F^s_{s''k'} F^i_{k'l'} F^l_{l's''}\dfrac{a_{s''}a_{k'}a_{l'}}{a_sa_i}
        \\&=\sum_{i',j',s'} \sum_{s'',k',l'} F^k_{i'j'} F^j_{j's'} F^i_{k'l'} F^l_{l's''} \sum_s F^s_{s'i'} F^s_{s''k'}\dfrac{a_{i'}a_{j'}a_{s'}}{a_ka_j}\dfrac{a_{s''}a_{k'}a_{l'}}{a_sa_i}.
    \end{align*}
By Proposition \ref{green},
    \[
     \sum_s F^s_{s'i'} F^s_{s''k'}\dfrac{a_{s''}a_{k'}}{a_s}=\sum_{s_1,s_2,s_3,s_4} F^{s'}_{s_1 s_2} F^{s''}_{s_1s_3} F^{k'}_{s_2s_4} F^{i'}_{s_3s_4}\dfrac{a_{s_1}a_{s_2}a_{s_3}a_{s_4}}{a_{s'}a_{i'}}.
    \]
    Hence
    \begin{align*}
        \sum_{s}{}^\imath F_{jk}^s{}^\imath F_{is}^l
        &=\sum_{i',j',s'} \sum_{s'',k',l'} F^k_{i'j'} F^j_{j's'} F^i_{k'l'} F^l_{l's''} \dfrac{a_{i'}a_{j'}a_{s'}a_{s''}a_{k'}a_{l'}}{a_ia_ja_k}\sum_s F^s_{s'i'} F^s_{s''k'}\dfrac{1}{a_s}
        \\&=\sum_{i',j',s'} \sum_{s'',k',l'} F^k_{i'j'} F^j_{j's'} F^i_{k'l'} F^l_{l's''}
        \dfrac{a_{i'}a_{j'}a_{s'}a_{s''}a_{k'}a_{l'}}{a_ia_ja_k}
        \\&\qquad\cdot\sum_{s_1,s_2,s_3,s_4} F^{s'}_{s_1 s_2} F^{s''}_{s_1s_3} F^{k'}_{s_2s_4} F^{i'}_{s_3s_4}
   \dfrac{a_{s_1}a_{s_2}a_{s_3}a_{s_4}}{a_{s'}a_{s''}a_{k'}a_{i'}}
   \\&=\sum_{j',l'}\sum_{s_1,s_2,s_3,s_4}F^k_{s_3s_4j'}
        F^j_{j's_1 s_2}
        F^i_{s_2s_4l'}
        F^l_{l's_1s_3}\dfrac{a_{j'}a_{l'}a_{s_1}a_{s_2}a_{s_3}a_{s_4}}{a_ia_ja_k}.
    \end{align*}
Similarly,
\begin{align*}
        \sum_{t}{}^\imath F_{tk}^l{}^\imath F_{ij}^t
        =\sum_{i',k'}\sum_{t_1,t_2,t_3,t_4}F^k_{i't_1t_2}F^l_{t_3t_4i'}
         F^j_{t_2t_4k'}F^i_{k't_1t_3}\dfrac{a_{i'}a_{k'}a_{t_1}a_{t_2}a_{t_3}a_{s_4}}{a_ia_ja_k}.
    \end{align*}
With variables $j',l',s_1,s_2,s_3,s_4$ corresponding to $t_2,t_3,t_4,k',i',t_1$, respectively, it follows that $\sum\limits_{s}{}^\imath F_{jk}^s{}^\imath F_{is}^l=\sum\limits_{t}{}^\imath F_{tk}^l{}^\imath F_{ij}^t$. 

    Finally, by a similar argument to Theorem \ref{thm1}, we can also obtain the unit $u(1)$.
\end{proof}

This proposition recovers Theorem \ref{thm1} which is the special case when $a_i=1$ for all $i\in I$.


\section{$\imath$Hopf algebras and self-dual Hopf algebras}

In this section, we aim to construct a new algebraic structure, called $\imath$Hopf algebras, within the framework of self-dual Hopf algebras.


\subsection{Self-dual Hopf algebra}
Let's first review the definition of self-dual Hopf algebras. Let $A$ be a finite-dimensional Hopf algebra. 
To distinguish from Section 3, we assume that $\{d_{\tilde{i}}\}_{\tilde{i}\in I}$ is a basis of $A$, and
\begin{align}\label{comult of di}
d_{\tilde{i}}\cdot d_{\tilde{j}}=\sum_{\tilde{k}\in I}\widetilde{F}_{\tilde{i}\tilde{j}}^{\tilde{k}}d_{\tilde{k}},\quad\Delta(d_{\tilde{k}})=\sum_{\tilde{i},\tilde{j}\in I} \widetilde{G}_{\tilde{i}\tilde{j}}^{\tilde{k}} d_{\tilde{i}}\otimes d_{\tilde{j}}.
\end{align}
Let $A^*$ be the dual space of $A$ with a dual basis $\{{{d}}^*_{\tilde{i}}\}_{\tilde{i}\in I}$, then there exists a natural bialgebra structure on $A^*$ in the following sense: 
\begin{align}\label{comult of di}
d^*_{\tilde{i}}\cdot d^*_{\tilde{j}}=\sum_{\tilde{k}\in I}\widetilde{G}_{\tilde{i}\tilde{j}}^{\tilde{k}}d^*_{\tilde{k}},\quad\Delta(d^*_{\tilde{k}})=\sum_{\tilde{i},\tilde{j}\in I} \widetilde{F}_{\tilde{i}\tilde{j}}^{\tilde{k}} d^*_{\tilde{i}}\otimes d^*_{\tilde{j}}.
\end{align}

A Hopf algebra $A$ is called \textit{self-dual} if there exists a Hopf algebra isomorphism $\varphi:A\rightarrow A^*$. It is further called \textit{symmetrically self-dual} if $\varphi=\varphi^*$. Here,
$\varphi^*$ is the dual map of $\varphi$ given by
\begin{align}\label{dual map} \varphi^*(d_{\tilde{i}})(d_{\tilde{j}})=\varphi(d_{\tilde{j}})(d_{\tilde{i}}) \text{\quad for\; any\;} \tilde{i}, \tilde{j}\in I.
\end{align}

Let $S=(s_{\tilde{j}\tilde{i}})$ be the matrix representation of $\varphi$ with respect to the bases  $\{d_{\tilde{i}}\}_{\tilde{i}\in I}$ and   $\{d^*_{\tilde{i}}\}_{\tilde{i}\in I}$, namely, $\varphi(d_{\tilde{i}})=\sum_{\tilde{j}}s_{\tilde{j}\tilde{i}}d^*_{\tilde{j}}$. Then it is easy to see that $\varphi=\varphi^*$ if and only if $S$ is symmetric.  




The following result states that under the condition 
 $\varphi=\varphi^*$, in order to prove $\varphi$ is an isomorphism of bialgebras, it suffices to show that it is an isomorphism of algebras or coalgebras.

\begin{proposition}
\label{bialgiso}
    Assume that $A$ is a bialgebra, $A^*$ is the dual bialgebra of $A$, and $\varphi$ is a map between $A$ and $A^*$ satisfying $\varphi=\varphi^*$. Then the following are equivalent:

    $(a)$ The map $\varphi$ is an isomorphism of algebras;

    $(b)$ The map $\varphi$ is an isomorphism of coalgebras.


\end{proposition}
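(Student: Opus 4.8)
The plan is to exploit the duality between $A$ and $A^*$ together with the symmetry hypothesis $\varphi = \varphi^*$. The key observation is that an algebra map $A \to A^*$ is, upon taking duals, a coalgebra map $(A^*)^* \to A^*$; and under the canonical identification $(A^*)^* \cong A$ (which is an isomorphism of bialgebras, since $A$ is finite-dimensional), the dual of $\varphi$ becomes a coalgebra map $\varphi^* : A \to A^*$. So the hypothesis $\varphi = \varphi^*$ immediately converts "algebra isomorphism" into "coalgebra isomorphism" and conversely. Concretely, in terms of structure constants: by \eqref{comult of di} the multiplication of $A^*$ has constants $\widetilde G^{\tilde k}_{\tilde i \tilde j}$ and its comultiplication has constants $\widetilde F^{\tilde k}_{\tilde i \tilde j}$. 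I would first record, via the representation matrix $S = (s_{\tilde j \tilde i})$ of $\varphi$, what it means for $\varphi$ to be an algebra map and what it means for $\varphi$ to be a coalgebra map, each as an explicit identity among the $\widetilde F$'s, $\widetilde G$'s and $s$'s.

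First I would write out $(a)$ as an equation. The condition $\varphi(d_{\tilde i}\cdot d_{\tilde j}) = \varphi(d_{\tilde i})\cdot \varphi(d_{\tilde j})$ in $A^*$, using $\varphi(d_{\tilde i}) = \sum_{\tilde a} s_{\tilde a \tilde i} d^*_{\tilde a}$ and the product rule $d^*_{\tilde a}\cdot d^*_{\tilde b} = \sum_{\tilde c} \widetilde G^{\tilde c}_{\tilde a \tilde b} d^*_{\tilde c}$, expands to
\begin{align}\label{eq:alg-cond}
  \sum_{\tilde k} \widetilde F^{\tilde k}_{\tilde i \tilde j}\, s_{\tilde c \tilde k} = \sum_{\tilde a, \tilde b} s_{\tilde a \tilde i}\, s_{\tilde b \tilde j}\, \widetilde G^{\tilde c}_{\tilde a \tilde b}, \qquad \forall\, \tilde i, \tilde j, \tilde c \in I.
\end{align}
Then I would write out $(b)$: the condition that $\varphi$ is a coalgebra map, $\Delta_{A^*}(\varphi(d_{\tilde k})) = (\varphi\otimes\varphi)(\Delta_A(d_{\tilde k}))$, using $\Delta_{A^*}(d^*_{\tilde c}) = \sum_{\tilde a,\tilde b}\widetilde F^{\tilde c}_{\tilde a \tilde b} d^*_{\tilde a}\otimes d^*_{\tilde b}$ and $\Delta_A(d_{\tilde k}) = \sum_{\tilde i, \tilde j}\widetilde G^{\tilde k}_{\tilde i \tilde j} d_{\tilde i}\otimes d_{\tilde j}$, expands to
\begin{align}\label{eq:coalg-cond}
  \sum_{\tilde c} s_{\tilde c \tilde k}\, \widetilde F^{\tilde c}_{\tilde a \tilde b} = \sum_{\tilde i, \tilde j} \widetilde G^{\tilde k}_{\tilde i \tilde j}\, s_{\tilde a \tilde i}\, s_{\tilde b \tilde j}, \qquad \forall\, \tilde k, \tilde a, \tilde b \in I.
\end{align}
Now the two identities \eqref{eq:alg-cond} and \eqref{eq:coalg-cond} are literally the same equation, after relabeling: \eqref{eq:coalg-cond} with $(\tilde k,\tilde a,\tilde b)$ renamed to $(\tilde c, \tilde i, \tilde j)$ reads $\sum_{\tilde k} s_{\tilde k \tilde c}\widetilde F^{\tilde k}_{\tilde i \tilde j} = \sum_{\tilde a,\tilde b}\widetilde G^{\tilde c}_{\tilde a \tilde b} s_{\tilde i \tilde a} s_{\tilde j \tilde b}$, and since $S$ is symmetric ($s_{\tilde k \tilde c} = s_{\tilde c \tilde k}$, $s_{\tilde i \tilde a} = s_{\tilde a \tilde i}$, $s_{\tilde j \tilde b} = s_{\tilde b \tilde j}$), this coincides with \eqref{eq:alg-cond}. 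Since $\varphi$ is the same linear map in both statements and $\varphi$ is a bijection in either case by hypothesis (it is "an isomorphism" of algebras or of coalgebras), the equivalence of $(a)$ and $(b)$ follows: the bijectivity is common to both, and the structural compatibility is encoded by one and the same identity, which holds in one case iff it holds in the other.

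I would also briefly note that unit/counit compatibility is automatic and carries over: an algebra isomorphism preserves units, and under $\varphi = \varphi^*$ the dual statement gives that $\varphi$ preserves counits, so nothing extra is needed; I would mention this in a sentence rather than belabor it. The only real subtlety — and the one step I would be most careful about — is bookkeeping with the two conventions for the matrix $S$: the paper writes $\varphi(d_{\tilde i}) = \sum_{\tilde j} s_{\tilde j \tilde i} d^*_{\tilde j}$, so $S = (s_{\tilde j \tilde i})$ has the output index first, and one must check that the dual map $\varphi^*$ defined by \eqref{dual map} indeed has matrix $S^{\mathrm T}$, so that $\varphi = \varphi^*$ translates precisely into $S = S^{\mathrm T}$, i.e. $s_{\tilde i \tilde j} = s_{\tilde j \tilde i}$. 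Getting this transpose exactly right, and matching the index placement in \eqref{eq:alg-cond}–\eqref{eq:coalg-cond}, is the main obstacle; once the symmetry of $S$ is correctly invoked, the equivalence is essentially immediate from comparing the two displayed identities. I would present the argument in this structure-constant form rather than abstractly, both because it is self-contained given the setup in the excerpt and because it makes the role of $\varphi = \varphi^*$ completely transparent.
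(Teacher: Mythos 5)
Your argument is correct and rests on exactly the same idea as the paper's proof: the coalgebra-morphism condition for $\varphi$ is the transpose (dual) of the algebra-morphism condition, and the hypothesis $\varphi=\varphi^*$, i.e.\ $S=S^{\mathrm T}$, identifies the two. The paper carries this out by dualizing the commutative square $\varphi\circ m = m^*\circ(\varphi\otimes\varphi)$ rather than by comparing the two structure-constant identities, but the content is identical, and your index bookkeeping (including the check that $\varphi=\varphi^*$ translates to symmetry of $S$) is accurate.
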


\begin{proof}
    We only prove that $(a)$ implies $(b)$, since the other one is similar. If $\varphi$ is an isomorphism of algebras, then we have a commutative diagram
    \[
    \begin{tikzpicture}
	\node (0) at (0,0) {$A^*\otimes A^*$};
	\node (1) at (2,0) {$A^*$.};
	
	\node (01) at (0,2) {$A\otimes A$};
	\node (11) at (2,2) {$A$};

	\draw[->] (01) --node[above ]{$m$} (11);

	\draw[->] (11) --node[right]{$\varphi$} (1);
	
	\draw[->] (0) --node[above ]{$m^*$} (1);
	
		\draw[<-] (0) --node[left ]{$\varphi\otimes \varphi$} (01);

	\end{tikzpicture}
    \]
    Take the dual maps for the whole diagram, and we obtain
    \[
    \begin{tikzpicture}
	\node (0) at (0,0) {$A\otimes A$};
	\node (1) at (2,0) {$A$};
	
	\node (01) at (0,2) {$A^*\otimes A^*$};
	\node (11) at (2,2) {$A^*$};

	\draw[<-] (01) --node[above ]{$\Delta^*$} (11);

	\draw[<-] (11) --node[right]{$\varphi^*$} (1);
	
	\draw[<-] (0) --node[above ]{$\Delta$} (1);
	
		\draw[->] (0) --node[left ]{$\varphi^*\otimes \varphi^*$} (01);

	\end{tikzpicture}
    \]
    which implies $\varphi^*=\varphi$ is also an isomorphism of coalgebras.
\end{proof}

 \subsection{Base-change procedure}

We aim to define a new algebraic structure on a symmetrically self-dual Hopf algebra $A$, as discussed in the previous section. Note that in general, the equality 
$\widetilde{F}_{\tilde{i}\tilde{j}}^{\tilde{k}} =\widetilde{G}_{\tilde{i}\tilde{j}}^{\tilde{k}}$
  does not hold. To address this, we can apply the base-change strategy to reduce the problem to a simpler case. For this approach, we will require the translation formula for the structure constants under the base-change procedure.

  
  Let $\{c_i\}_{i\in I}$ be a basis of $A$ with 
\begin{align}\label{Fijk and Gijk for ci}
 c_i\cdot c_j=\sum_{k\in I} F_{ij}^k c_k,\qquad \Delta(c_k)=\sum_{i,j\in I} G_{ij}^k c_i\otimes c_j. 
\end{align}
Assume $T=(t_{i\tilde{i}})$ is the transition matrix from the basis $\{c_i\}_{i\in I}$  to  $\{d_{\tilde{i}}\}_{\tilde{i}\in I}$ , i.e.,  $d_{\tilde{i}}=\sum_{i}t_{i\tilde{i}}c_{i}$ for any $\tilde{i}\in I$. Denote by ${T}^{-1}:=(\hat{t}_{\tilde{i}i})$ the inverse matrix of $T$. 
 
\begin{lemma} 
   Keep notations as above. We have
    \begin{align}
    \label{trans1}
\widetilde{F}_{\tilde{i}\tilde{j}}^{\tilde{k}}=\sum_{i,j,k}t_{i\tilde{i}}t_{j\tilde{j}}\hat{t}_{\tilde{k}k}F_{ij}^k, \qquad \widetilde{G}_{\tilde{i}\tilde{j}}^{\tilde{k}}=\sum_{i,j,k}\hat{t}_{\tilde{i}i}\hat{t}_{\tilde{j}j}{t}_{k\tilde{k}}G_{ij}^k.
    \end{align}
\end{lemma}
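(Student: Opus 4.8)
The plan is to prove the two transition formulas in \eqref{trans1} by direct substitution, expanding the defining relations \eqref{comult of di} for $\{d_{\tilde i}\}$ in terms of the basis $\{c_i\}$ via $d_{\tilde i}=\sum_i t_{i\tilde i}c_i$ and then re-expanding in the $\{d_{\tilde k}\}$ basis via the inverse $c_k=\sum_{\tilde k}\hat t_{\tilde k k}d_{\tilde k}$ (equivalently, comparing coefficients). The two identities are independent of each other, so I would handle them one at a time.

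\medskip
\noindent\textbf{The multiplication constants.} First I would compute $d_{\tilde i}\cdot d_{\tilde j}$ by writing
\[
d_{\tilde i}\cdot d_{\tilde j}=\Big(\sum_{i}t_{i\tilde i}c_i\Big)\cdot\Big(\sum_{j}t_{j\tilde j}c_j\Big)
=\sum_{i,j}t_{i\tilde i}t_{j\tilde j}\,c_i\cdot c_j
=\sum_{i,j,k}t_{i\tilde i}t_{j\tilde j}F_{ij}^k\,c_k.
\]
Then I substitute $c_k=\sum_{\tilde k}\hat t_{\tilde k k}d_{\tilde k}$ (this is exactly the statement $T^{-1}=(\hat t_{\tilde i i})$: since $d_{\tilde i}=\sum_i t_{i\tilde i}c_i$, the inverse relation reads $c_i=\sum_{\tilde i}\hat t_{\tilde i i}d_{\tilde i}$). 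This yields
\[
d_{\tilde i}\cdot d_{\tilde j}=\sum_{\tilde k}\Big(\sum_{i,j,k}t_{i\tilde i}t_{j\tilde j}\hat t_{\tilde k k}F_{ij}^k\Big)d_{\tilde k},
\]
and comparing with $d_{\tilde i}\cdot d_{\tilde j}=\sum_{\tilde k}\widetilde F_{\tilde i\tilde j}^{\tilde k}d_{\tilde k}$ and using linear independence of $\{d_{\tilde k}\}$ gives the first formula.

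\medskip
\noindent\textbf{The comultiplication constants.} For the second identity I start from $\Delta(d_{\tilde k})=\Delta\big(\sum_k t_{k\tilde k}c_k\big)=\sum_k t_{k\tilde k}\Delta(c_k)=\sum_{i,j,k}t_{k\tilde k}G_{ij}^k\,c_i\otimes c_j$, then substitute $c_i=\sum_{\tilde i}\hat t_{\tilde i i}d_{\tilde i}$ and $c_j=\sum_{\tilde j}\hat t_{\tilde j j}d_{\tilde j}$ to obtain
\[
\Delta(d_{\tilde k})=\sum_{\tilde i,\tilde j}\Big(\sum_{i,j,k}\hat t_{\tilde i i}\hat t_{\tilde j j}t_{k\tilde k}G_{ij}^k\Big)d_{\tilde i}\otimes d_{\tilde j},
\]
and comparing with $\Delta(d_{\tilde k})=\sum_{\tilde i,\tilde j}\widetilde G_{\tilde i\tilde j}^{\tilde k}d_{\tilde i}\otimes d_{\tilde j}$, using that $\{d_{\tilde i}\otimes d_{\tilde j}\}$ is a basis of $A\otimes A$, gives the second formula. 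The only point requiring a moment's care is keeping the placement of indices on $T$ versus $T^{-1}$ straight: multiplication constants transform covariantly in the two input slots and contravariantly in the output slot (hence two $t$'s and one $\hat t$), while comultiplication constants transform the opposite way (one $t$ on the input $\tilde k$ and two $\hat t$'s on the outputs $\tilde i,\tilde j$). There is no genuine obstacle here — it is a routine change-of-basis computation — so the "hard part" is merely bookkeeping; one should double-check the order of summation and that $\sum_k t_{k\tilde k}\hat t_{\tilde k k}=\delta$ is being used in the correct direction.
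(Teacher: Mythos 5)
Your proof is correct and is essentially the same routine change-of-basis computation as the paper's: the paper starts from the relation in the $\{c_i\}$ basis and contracts with entries of $T$ to recover the $\{d_{\tilde i}\}$ relation, while you expand $d_{\tilde i}\cdot d_{\tilde j}$ (resp.\ $\Delta(d_{\tilde k})$) directly and compare coefficients, which amounts to the same substitution read in the opposite direction. The index conventions ($c_i=\sum_{\tilde i}\hat t_{\tilde i i}d_{\tilde i}$, two $t$'s and one $\hat t$ for $\widetilde F$, the reverse for $\widetilde G$) all match the paper's statement.
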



\begin{proof}
   Reversing the transformation formula $d_{\tilde{i}}=\sum_{i}t_{i\tilde{i}}c_{i}$, we get $c_{{i}}=\sum_{\tilde{i}}\hat{t}_{\tilde{i}i}d_{\tilde{i}}$. By applying this formula to $c_i,c_j,c_k$ in the multiplication $c_i\cdot c_j=\sum_{k\in I} F_{ij}^k c_k$, we obtain \begin{align*}
        \sum_{\tilde{i}}\hat{t}_{\tilde{i}i}d_{\tilde{i}}\cdot \sum_{\tilde{j}}\hat{t}_{\tilde{j}j}d_{\tilde{j}}=\sum_{k} F_{ij}^k \sum_{\tilde{k}}\hat{t}_{\tilde{k}k}d_{\tilde{k}}.
    \end{align*}
    Then
    \begin{align*}
        \sum_{i}t_{ii_1}\big(\sum_{k} F_{ij}^k \sum_{\tilde{k}}\hat{t}_{\tilde{k}k}d_{\tilde{k}}\big)
        &=\sum_{i}t_{ii_1} \big(\sum_{\tilde{i}}\hat{t}_{\tilde{i}i}d_{\tilde{i}}\cdot \sum_{\tilde{j}}\hat{t}_{\tilde{j}j}d_{\tilde{j}}\big)
        \\&=\sum_{\tilde{i}}\delta_{\tilde{i}i_1}d_{\tilde{i}}\cdot \sum_{\tilde{j}}\hat{t}_{\tilde{j}j}d_{\tilde{j}}
        \\&=d_{{i_1}}\cdot \sum_{\tilde{j}}\hat{t}_{\tilde{j}j}d_{\tilde{j}}.
    \end{align*}
    Using the same technique again, we have \[
    d_{i_1}\cdot d_{j_1}=\sum_{\tilde{k}}\sum_{i,j,k} t_{ii_1}t_{jj_1}\hat{t}_{\tilde{k}k} F_{ij}^k d_{\tilde{k}}.
    \]Hence, with $i_1,j_1$ replaced by $\tilde{i},\tilde{j}$, we obtain $\widetilde{F}_{\tilde{i}\tilde{j}}^{\tilde{k}}=\sum_{i,j,k}t_{i\tilde{i}}t_{j\tilde{j}}\hat{t}_{\tilde{k}k}F_{ij}^k.$

    As for the comultiplication, the above technique is still available and we omit the details here.
\end{proof}


 The following result shows that, under the condition that the field is algebraically closed, we can simplify the situation to the case presented in Theorem \ref{thm1} by employing a base-change strategy.

\begin{lemma}
\label{FequalG}
    Let $A$ be a symmetrically self-dual Hopf algebra over an algebraically closed field $\bfk$. Then there exists a basis $\{c_i\}_{i\in I}$ of $A$ with 
$F_{ij}^k=G_{ij}^k$ for any $i,j,k\in I$.
\end{lemma}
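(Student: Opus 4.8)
The plan is to build the desired basis from the self-duality isomorphism $\varphi\colon A\to A^*$ and its matrix $S=(s_{\tilde i\tilde j})$, which is symmetric by hypothesis. The key observation is that $\varphi$ being a morphism of algebras translates, via the dual bialgebra structure \eqref{comult of di} on $A^*$, into an identity relating $\widetilde F$ and $\widetilde G$ through $S$. Concretely, applying $\varphi$ to $d_{\tilde i}\cdot d_{\tilde j}=\sum_{\tilde k}\widetilde F_{\tilde i\tilde j}^{\tilde k}d_{\tilde k}$ and using that the product in $A^*$ has structure constants $\widetilde G$, one gets
\[
\sum_{\tilde p,\tilde q}s_{\tilde p\tilde i}s_{\tilde q\tilde j}\,\widetilde G_{\tilde p\tilde q}^{\tilde k}
=\sum_{\tilde k'}\widetilde F_{\tilde i\tilde j}^{\tilde k'}s_{\tilde k\tilde k'}
\qquad\text{for all }\tilde i,\tilde j,\tilde k\in I.
\]
In matrix form, with $F^{(\tilde k)}:=(\widetilde F_{\tilde i\tilde j}^{\tilde k})_{\tilde i,\tilde j}$ and $G^{(\tilde k)}:=(\widetilde G_{\tilde i\tilde j}^{\tilde k})_{\tilde i,\tilde j}$, this reads $S^{\mathsf T}G^{(\tilde k)}S=\sum_{\tilde k'}s_{\tilde k\tilde k'}F^{(\tilde k')}$, i.e. after introducing the contraction against the last index, $S\,(\text{stack of }G)\,S = S\cdot(\text{stack of }F)$ appropriately interpreted. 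The point of the base change will be to absorb the two outer factors of $S$ and rename indices so that the remaining relation becomes $F=G$.

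The main step is therefore: since $\bfk$ is algebraically closed and $S$ is symmetric and invertible, choose a matrix $R$ with $R^{\mathsf T}R = S$ (a symmetric "square root": diagonalize $S=P^{\mathsf T}DP$ with $P$ orthogonal-type over $\bfk$, or more safely use that any invertible symmetric matrix over an algebraically closed field admits a factorization $S=R^{\mathsf T}R$; this is where algebraic closure is used, to extract square roots of the eigenvalues). Then I would take the new basis $\{c_i\}$ to be the one with transition matrix $T = R^{-1}$ from $\{c_i\}$ to $\{d_{\tilde i}\}$ — equivalently $\hat t = R$ — and compute $F_{ij}^k$, $G_{ij}^k$ via the translation formulas \eqref{trans1}. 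Feeding the $S$-relation above into these formulas, the factors $R^{\mathsf T}R=S$ on the covariant side cancel against the inverse on the contravariant side, and one is left with $F_{ij}^k=G_{ij}^k$ for all $i,j,k$. It is worth keeping track of whether one wants $T=R^{-1}$ or $T^{\mathsf T}=R$ etc.; a short bookkeeping check with \eqref{trans1} fixes the correct normalization, and the symmetry of $S$ (hence of $R$) is exactly what makes the two placements of $R$ in the $\widetilde F$-formula and the $\widetilde G$-formula match.

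The main obstacle I anticipate is purely the index bookkeeping in matching \eqref{trans1} against the $S$-twisted relation between $\widetilde F$ and $\widetilde G$: the formula for $\widetilde F$ uses $T$ on the two lower indices and $T^{-1}$ on the upper one, whereas $\widetilde G$ uses them the opposite way, so the "square root" $R$ has to be inserted so that it lands on lower indices of $G$ and gets inverted away on the upper index of $F$. Symmetry of $S$ (so $R^{\mathsf T}$ and $R$ can be interchanged freely up to transpose) is what saves this. I would first derive and record the $S$-relation cleanly as a lemma-internal display, then state the factorization $S=R^{\mathsf T}R$ (citing algebraic closure), then set $T=R^{-1}$, then substitute into \eqref{trans1} and simplify; the computation is mechanical once the relation and the choice of $R$ are in place. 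One should also remark that $I$ is finite so all these sums are finite and $S$ is an honest matrix, and that $S$ is invertible because $\varphi$ is an isomorphism.
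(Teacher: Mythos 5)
Your proposal is correct and follows essentially the same route as the paper: use algebraic closedness to factor the symmetric matrix $S$ as $S=T^t T$ and change basis by $T$, so that the relation between $\widetilde F$ and $\widetilde G$ coming from $\varphi$ being an algebra isomorphism collapses to $F_{ij}^k=G_{ij}^k$ (the paper phrases this as checking $\varphi(c_i)=c_i^*$ in the new basis rather than pushing the structure-constant identity through \eqref{trans1}, but that is only a presentational difference). The normalization you flagged does need fixing: the transition matrix $T$ from $\{c_i\}$ to $\{d_{\tilde i}\}$ should itself be the square root, $T=R$ with $R^tR=S$ (equivalently $c_i=\sum_{\tilde i}\hat t_{\tilde i i}d_{\tilde i}$ with $\hat t=R^{-1}$), not $T=R^{-1}$; with that choice one gets $(T^{-1})^tST^{-1}=I$ and the argument closes as you describe.
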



\begin{proof}
By assumption, $A$ is symmetrically self-dual implies 
there exists a Hopf algebra isomorphism 
$\varphi:A\to A^*$ with $\varphi=\varphi^*$.
 Let $\{d_{\tilde{i}}\}_{\tilde{i}\in I}$ be a basis of $A$ and $\{d^*_{\tilde{i}}\}_{\tilde{i}\in I}$ be its dual basis. Let  $S=(s_{\tilde{i}\tilde{j}})$ be the matrix representation of $\varphi$ with respect to these two bases, namely, $\varphi(d_{\tilde{i}})=\sum_{\tilde{j}}s_{\tilde{j}\tilde{i}}d^*_{\tilde{j}}$.
Note that $\varphi=\varphi^*$ implies $S$ is symmetric. Since the field is algebraically closed,
any symmetric matrix 
can always be diagonalized by the spectral decomposition theorem.
Consequently, $S=T^t\cdot T$ for some matrix $T=(t_{i\tilde{i}})$. 

Denote by ${T}^{-1}:=(\hat{t}_{\tilde{i}i})$ the inverse matrix of $T$, and let $c_{{i}}=\sum_{\tilde{i}}\hat{t}_{\tilde{i}i}d_{\tilde{i}}$ for any $i\in I$. Then $\{c_i\}_{i\in I}$ is a basis of $A$. Denote its dual basis by $\{c^*_i\}_{i\in I}$.
We have $c^*_j(c_i)=c^*_j(\sum_{\tilde{i}}\hat{t}_{\tilde{i}i}d_{\tilde{i}})=\delta_{ij}$. Therefore,  $c^*_j(d_{\tilde{i}})=\sum_{{i}}{t}_{i\tilde{i}}\delta_{ij}={t}_{j\tilde{i}}$, which implies that $c^*_j=\sum_{\tilde{i}}t_{j\tilde{i}}d^*_{\tilde{i}}$. In other words, $d^*_{{\tilde{i}}}=\sum_{{i}}\hat{t}_{\tilde{i}i}c^*_{{i}}$ and similarly, $d_{\tilde{i}}=\sum_{{i}}{t}_{i\tilde{i}}c_{{i}}$. 

Now let us focus on the self-dual $\varphi$. We have \[
    \varphi(\sum_{{i}}{t}_{i\tilde{i}}c_{{i}})=\sum_{\tilde{j}}s_{\tilde{j}\tilde{i}}\sum_{{j}}\hat{t}_{\tilde{j}j}c^*_{{j}}.
    \]
    It follows that \begin{align*}   \varphi(c_{{i}})&=\sum_{{\tilde{i}}}\hat{t}_{\tilde{i}i}\sum_{\tilde{j}}s_{\tilde{j}\tilde{i}}\sum_{{j}}\hat{t}_{\tilde{j}j}c^*_{{j}}    \\&=\sum_{{\tilde{i}}}\hat{t}_{\tilde{i}i}\sum_{\tilde{j}}\sum_kt_{k\tilde{i}}t_{k\tilde{j}}\sum_{{j}}\hat{t}_{\tilde{j}j}c^*_{{j}}
    \\&=\sum_{j,k}\delta_{ik}\delta_{jk}c^*_j=\sum_{j}\delta_{ij}c^*_j=c^*_i.
    \end{align*}
    Note that $$c_i^*\cdot c_j^*=\varphi(c_i)\cdot \varphi(c_j)=\varphi(\sum_{k} F_{ij}^k c_k)=\sum_{k} F_{ij}^k c_k^*.$$ Compared with that $c_i^*\cdot c_j^*=\sum_k G_{ij}^k c_k^*$, we immediately obtain $F_{ij}^k=G_{ij}^k$.
\end{proof}


    

\subsection{$\imath$Hopf algebras} Now we can state the main result of this paper.

\begin{theorem}
\label{thm2}

 Let $A$ be a symmetrically self-dual Hopf algebra with the basis $\{d_{\tilde{i}}\}_{\tilde{i}\in I}$, and the self-dual $\varphi$ is given by $\varphi(d_{\tilde{i}})=\sum_{\tilde{j}}s_{\tilde{j}\tilde{i}}d^*_{\tilde{j}}$ for any  $\tilde{i}\in I$. 
 Let $(A^\imath,\diamond)$ be a vector space with the same basis as $A$, equipped with the multiplication $d_{\tilde{i}}\diamond d_{\tilde{j}}=\sum_{\tilde{k}\in I} {}^\imath \widetilde{F}^{\tilde{k}}_{\tilde{i}\tilde{j}} d_{\tilde{k}}$, where
   \begin{align}
       \label{formula of iFijk}
 {}^\imath \widetilde{F}^{\tilde{k}}_{\tilde{i}\tilde{j}}=\sum_{y_1,y_2,x,z}
    s_{y_1y_2}\widetilde{G}_{y_1x}^{\tilde{i}}\widetilde{G}_{zy_2}^{\tilde{j}}\widetilde{F}_{xz}^{\tilde{k}}  \qquad\text{  for any  }\tilde{i},\tilde{j},\tilde{k}\in I.
   \end{align}
   Then $(A^\imath,\diamond)$ is an associative algebra with the unit $u(1)$.
   
\end{theorem}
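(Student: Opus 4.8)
The plan is to reduce the statement to Theorem \ref{thm1} by a base change. First I would invoke Lemma \ref{FequalG} to obtain a basis $\{c_i\}_{i\in I}$ of $A$ with $F_{ij}^k=G_{ij}^k$ for all $i,j,k\in I$; let $T=(t_{i\tilde i})$ be the transition matrix from $\{c_i\}_{i\in I}$ to $\{d_{\tilde i}\}_{\tilde i\in I}$ and $T^{-1}=(\hat t_{\tilde i i})$ its inverse. Recall from the proof of Lemma \ref{FequalG} that $T$ may be chosen with $S=T^tT$, where $S=(s_{\tilde j\tilde i})$ is the matrix of $\varphi$; equivalently $s_{y_1y_2}=\sum_{m}t_{my_1}t_{my_2}$ for all $y_1,y_2\in I$. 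By Theorem \ref{thm1}, the space with basis $\{c_i\}_{i\in I}$ and multiplication $c_i\diamond c_j=\sum_k{}^\imath F_{ij}^k c_k$, where ${}^\imath F^k_{ij}=\sum_{i',j',k'}F^j_{i'k'}F^i_{k'j'}F^k_{j'i'}$, is an associative algebra with unit $u(1)$.

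The heart of the argument is the identity
\[
{}^\imath\widetilde F^{\tilde k}_{\tilde i\tilde j}=\sum_{i,j,k}t_{i\tilde i}\,t_{j\tilde j}\,\hat t_{\tilde k k}\,{}^\imath F^k_{ij}\qquad\text{for all }\tilde i,\tilde j,\tilde k\in I,
\]
which says that the structure constants \eqref{formula of iFijk} are obtained from the ${}^\imath F^k_{ij}$ by the base-change rule \eqref{trans1}. Granting this, the linear isomorphism $d_{\tilde i}\mapsto\sum_i t_{i\tilde i}c_i$ identifies $(A^\imath,\diamond)$ of the statement with the algebra of Theorem \ref{thm1} expressed in the basis $\{d_{\tilde i}\}_{\tilde i\in I}$, since \eqref{trans1} is precisely the compatibility condition (apply the base-change lemma preceding it to the algebra $(A^\imath,\diamond)$) for two sets of structure constants on $A$ related by $T$ to define the same product. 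Hence $(A^\imath,\diamond)$ is associative, and since $u(1)$ is a basis-independent element of $A$ which acts as the $\diamond$-unit in the $\{c_i\}$-picture, it is the $\diamond$-unit here as well.

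To prove the identity I would substitute into the right-hand side of \eqref{formula of iFijk} the translation formulas \eqref{trans1} for $\widetilde G^{\tilde i}_{y_1x}$, $\widetilde G^{\tilde j}_{zy_2}$, $\widetilde F^{\tilde k}_{xz}$, together with $s_{y_1y_2}=\sum_m t_{my_1}t_{my_2}$; all four are sums over $\{c_i\}$-basis indices. The sums over the $\{d_{\tilde i}\}$-basis indices then collapse via $TT^{-1}=I$: $\sum_{y_1}t_{my_1}\hat t_{y_1p}=\delta_{mp}$ and $\sum_{y_2}t_{my_2}\hat t_{y_2q'}=\delta_{mq'}$, and likewise $\sum_x t_{ax}\hat t_{xq}=\delta_{aq}$, $\sum_z t_{bz}\hat t_{zp'}=\delta_{bp'}$. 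What remains is $\sum_{i,j,k}t_{i\tilde i}t_{j\tilde j}\hat t_{\tilde k k}\big(\sum_{m,a,b}G^i_{ma}G^j_{bm}F^k_{ab}\big)$, and relabelling the three surviving dummy indices $(m,a,b)\mapsto(k',j',i')$ turns the inner sum into $\sum_{i',j',k'}G^i_{k'j'}G^j_{i'k'}F^k_{j'i'}$, which equals ${}^\imath F^k_{ij}$ after using $F=G$ in the $\{c_i\}$-basis (Lemma \ref{FequalG}).

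I expect the only real obstacle to be the index bookkeeping in this last computation: keeping track of which indices belong to $\{c_i\}$ and which to $\{d_{\tilde i}\}$, applying $TT^{-1}=T^{-1}T=I$ in the correct slot at each step, and — the one genuinely error-prone point — checking that the crossed pattern $\widetilde G^{\tilde i}_{y_1x}\widetilde G^{\tilde j}_{zy_2}\widetilde F^{\tilde k}_{xz}$ of \eqref{formula of iFijk} matches, after the base change, the crossed pattern $F^j_{i'k'}F^i_{k'j'}F^k_{j'i'}$ of Theorem \ref{thm1}. Everything else is formal: once the translation identity is confirmed, associativity and the unit statement follow at once from Theorem \ref{thm1}, with no further argument needed.
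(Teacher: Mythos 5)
Your reduction to Theorem \ref{thm1} via the base change $c_i=\sum_{\tilde i}\hat t_{\tilde i i}d_{\tilde i}$ with $S=T^t\cdot T$, followed by the verification that \eqref{formula of iFijk} is the $T$-transform of ${}^\imath F^k_{ij}$ (substitute \eqref{trans1} into \eqref{formula of iFijk} and collapse the sums over $y_1,y_2,x,z$ using $TT^{-1}=I$), is exactly the paper's argument; the paper merely runs the same computation in the opposite direction, starting from $\sum_{i,j,k}t_{i\tilde i}t_{j\tilde j}\hat t_{\tilde k k}\,{}^\imath F^k_{ij}$ and arriving at \eqref{formula of iFijk}. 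Your index bookkeeping for the crossed pattern also checks out: after relabelling, the surviving inner sum $\sum_{i',j',k'}G^i_{k'j'}G^j_{i'k'}F^k_{j'i'}$ coincides with ${}^\imath F^k_{ij}=\sum_{i',j',k'}F^j_{i'k'}F^i_{k'j'}F^k_{j'i'}$ once $F=G$ in the $\{c_i\}$-basis.

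There is, however, one genuine gap. Lemma \ref{FequalG} — and in particular the factorization $S=T^t\cdot T$ of the symmetric matrix $S$ — is only available when $\bfk$ is algebraically closed, whereas the theorem is stated over an arbitrary field; over $\R$, for instance, $S=(-1)$ admits no such factorization, so the auxiliary basis $\{c_i\}$ need not exist over $\bfk$. As written, your proof therefore only covers the algebraically closed case. The missing step is a short descent argument: extend scalars to the algebraic closure $\overline{\bfk}$, run your argument there to obtain associativity and the unit property, and then observe that the structure constants ${}^\imath\widetilde F^{\tilde k}_{\tilde i\tilde j}$ of \eqref{formula of iFijk} already lie in $\bfk$, being polynomial expressions in $s_{y_1y_2}$, $\widetilde G$ and $\widetilde F$; since the associativity and unit identities are equalities between elements of $\bfk$ that have been verified inside $\overline{\bfk}$, they hold over $\bfk$. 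This is precisely how the paper closes the argument, and with this addition your proof is complete.
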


\begin{proof}
Firstly, we assume $\bfk$ is an algebraically closed field. 
In this case, according to Lemma \ref{FequalG}, the representation matrix $S=(s_{ij})$ of $\varphi$ is symmetric and has a decomposition $S=T^t\cdot T$ for some matrix $T=(t_{i\tilde{i}})$, moreover, there exists a basis $\{c_{{i}}=\sum_{\tilde{i}}\hat{t}_{\tilde{i}i}d_{\tilde{i}}\}_{i\in I}$ of $A^\imath$ 
 with 
$F_{ij}^k=G_{ij}^k$ for any $i,j,k\in I$. 


    By Theorem \ref{thm1}, there exists an algebra structure of $A^\imath$ with the unit $u(1)$ and multiplication $c_i\diamond c_j=\sum_{k\in I} {}^\imath F^k_{ij} c_k$, where\[
   {}^\imath F^k_{ij}=\sum_{x,y,z}F^i_{yx}F^j_{zy}  F^k_{xz}.
    \]
    According to the base-change formula \eqref{trans1}, 
    we have
    \[ {}^\imath\widetilde{F}_{\tilde{i}\tilde{j}}^{\tilde{k}}=\sum_{i,j,k}t_{i\tilde{i}}t_{j\tilde{j}}\hat{t}_{\tilde{k}k}{}^\imath F_{ij}^k=\sum_{i,j,k}t_{i\tilde{i}}t_{j\tilde{j}}\hat{t}_{\tilde{k}k}\sum_{x,y,z}F^i_{yx}F^j_{zy}  F^k_{xz}.
    \]
    It remains to show that this formula coincides with \eqref{formula of iFijk}. 
Note that $F_{ij}^k=G_{ij}^k$ for any $i,j,k\in I$. Then by applying \eqref{trans1} again, we have
    \begin{align*}
        {}^\imath\widetilde{F}_{\tilde{i}\tilde{j}}^{\tilde{k}}
&=\sum_{i,j,k}t_{i\tilde{i}}t_{j\tilde{j}}\hat{t}_{\tilde{k}k}\sum_{x,y,z}G^i_{yx}G^j_{zy}  F^k_{xz}
\\&=\sum_{i,j,k}t_{i\tilde{i}}t_{j\tilde{j}}\hat{t}_{\tilde{k}k}
\sum_{x,y,z}\sum_{x_1,y_1,i_1}t_{yy_1}t_{xx_1}\hat{t}_{i_1i}\widetilde{G}_{y_1x_1}^{i_1}
\\&\qquad\cdot\sum_{y_2,z_2,j_2}t_{zz_2}t_{yy_2}\hat{t}_{j_2j}\widetilde{G}_{z_2y_2}^{j_2}
\sum_{x_3,z_3,k_3}\hat{t}_{x_3x}\hat{t}_{z_3z}t_{kk_3}\widetilde{F}_{x_3z_3}^{k_3}
\\&=\sum_{x_1,y_1,i_1}\sum_{y_2,z_2,j_2}\sum_{x_3,z_3,k_3}\sum_{i}t_{i\tilde{i}}\hat{t}_{i_1i}
\sum_{j}t_{j\tilde{j}}\hat{t}_{j_2j}\sum_{k}\hat{t}_{\tilde{k}k}t_{kk_3}
\\&\qquad\cdot\sum_{y}t_{yy_1}t_{yy_2}\sum_{x}t_{xx_1}\hat{t}_{x_3x}
\sum_{z}t_{zz_2}\hat{t}_{z_3z}\widetilde{G}_{y_1x_1}^{i_1}\widetilde{G}_{z_2y_2}^{j_2}\widetilde{F}_{x_3z_3}^{k_3}
\\&=\sum_{x_1,y_1,i_1}\sum_{y_2,z_2,j_2}\sum_{x_3,z_3,k_3}\delta_{i_1\tilde{i}}\delta_{j_2\tilde{j}}\delta_{k_3\tilde{k}}\delta_{x_1x_3}\delta_{z_2z_3}
    \sum_{y}t_{yy_1}t_{yy_2}\widetilde{G}_{y_1x_1}^{i_1}\widetilde{G}_{z_2y_2}^{j_2}\widetilde{F}_{x_3z_3}^{k_3}
    \\&=\sum_{y_1,y_2,x_3,z_3}
    \sum_{y}t_{yy_1}t_{yy_2}\widetilde{G}_{y_1x_3}^{\tilde{i}}\widetilde{G}_{z_3y_2}^{\tilde{j}}\widetilde{F}_{x_3z_3}^{\tilde{k}}
    \\&=\sum_{y_1,y_2,x_3,z_3}
    s_{y_1y_2}\widetilde{G}_{y_1x_3}^{\tilde{i}}\widetilde{G}_{z_3y_2}^{\tilde{j}}\widetilde{F}_{x_3z_3}^{\tilde{k}}.
    \end{align*}
Therefore, we obtain that $(A^\imath,\diamond)$ is an associative algebra with the unit $u(1)$ when  $\bfk$ is algebraically closed. 

In the general case, let $\bfk$ be an arbitrary field and $\overline{\bfk}$ be its algebraic closure. In this case, we can establish the algebraic structure of $(A^\imath,\diamond)$ over $\overline{\bfk}$. Note that ${}^\imath \widetilde{F}^{\tilde{k}}_{\tilde{i}\tilde{j}}\in\bfk$ for all $\tilde{i},\tilde{j},\tilde{k}\in I$, hence $(A^\imath,\diamond)$ is an associated algebra over $\bfk$ as well. 
    
    This completes the proof.
\end{proof}

We call $(A,\diamond)$ the \textit{$\imath$Hopf algebra} associated with the self-dual Hopf algebra $A$.

\begin{remark}

$(1)$ The simple cases $F_{ij}^k=G_{ij}^k$ and $F_{ij}^k=G_{ij}^k\cdot\dfrac{a_k}{a_ia_j}$ are special cases of Theorem \ref{thm2}, corresponding to that the representation matrix $S$ is a unit matrix or a diagonal matrix, respectively.

$(2)$ For general self-dual Hopf algebras, we can also construct the associated
$\imath$Hopf algebras. The construction should be framed within the context of Hopf pairings, and we will address this in a subsequent paper.
\end{remark}

\section{Applications}


It is well known that Taft algebras are a family of important finite-dimensional self-dual Hopf algebras. We apply our $\imath$Hopf algebra approach to Taft algebras as a typical example. For the reader's convenience, we recall the definition of Taft algebras and give the explicit self-dual morphism in the case of dimension 4.

Let $n\ge2$ be a fixed integer. Assume that the field $\mathbf{k}$ contains an $n$-th primitive root of unity,  denoted by $q$, and that $n$ is not divisible by the characteristic of $\mathbf{k}$.
The Taft algebra $H_n(q)$ is generated by two elements $g$ and $h$ subject to the relations\[
g^n=1,\quad h^n=0,\quad hg=qgh.
\]
It becomes a Hopf algebra with coalgebra structure $\Delta$ and antipode $S$ given by
\begin{align*}
&\Delta(g)=g\otimes g,\quad \Delta(h)=1\otimes h+h\otimes g,\quad \varepsilon(g)=1,\\
&\varepsilon(h)=0,\quad S(g)=g^{-1}=g^{n-1},\quad S(h)=-q^{-1}g^{n-1}h.
\end{align*}
Note that $\dim H_n(q)=n^2$ and $\{g^ih^j\mid 0\le i,j\le n-1\}$ forms a $\mathbf{k}$-basis for $H_n(q)$.


\begin{proposition}
    The Hopf algebra $H_2(q)$ is a symmetrically self-dual Hopf algebra.
\end{proposition}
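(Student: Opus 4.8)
The plan is to carry out an explicit computation in the $4$-dimensional algebra $H_2(q)$ and its dual (note that $n=2$ forces $q=-1$, and the hypothesis on $\operatorname{char}\bfk$ makes $2$ invertible). Work in the basis $\mathcal B=\{1,g,h,gh\}$ of $H_2(q)$ and the dual basis $\mathcal B^\ast=\{1^\ast,g^\ast,h^\ast,(gh)^\ast\}$ of $H_2(q)^\ast$. First I would read off the structure constants $F^\bullet_{\bullet\bullet}$ and $G^\bullet_{\bullet\bullet}$ of $H_2(q)$ from $g^2=1$, $h^2=0$, $hg=qgh$ and from $\Delta(g)=g\otimes g$, $\Delta(h)=1\otimes h+h\otimes g$, $\Delta(gh)=g\otimes gh+gh\otimes 1$. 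Plugging these into the description of the dual bialgebra recalled in Section 4.1, the product of $H_2(q)^\ast$ is controlled by the $G$'s (so, e.g., $1^\ast\cdot h^\ast=h^\ast$, $g^\ast\cdot(gh)^\ast=(gh)^\ast$, and most products of two basis covectors vanish) and its coproduct by the $F$'s; in particular its unit is $1^\ast+g^\ast$ and its counit is evaluation at $1$.

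The heart of the argument is to realize $H_2(q)^\ast$ as another copy of $H_2(q)$ via explicit Taft generators. I would put $\mathfrak g:=1^\ast-g^\ast$ and $\mathfrak h:=h^\ast+(gh)^\ast$ in $H_2(q)^\ast$ and verify, by short convolution computations, that $\mathfrak g^2=1^\ast+g^\ast=1_{H_2(q)^\ast}$, $\mathfrak h^2=0$, and $\mathfrak h\mathfrak g=-\mathfrak g\mathfrak h=q\,\mathfrak g\mathfrak h$ (with $\mathfrak g\mathfrak h=h^\ast-(gh)^\ast$), so the Taft relations hold. Hence the algebra presentation of $H_2(q)$ supplies an algebra homomorphism $\varphi\colon H_2(q)\to H_2(q)^\ast$ with $\varphi(g)=\mathfrak g$ and $\varphi(h)=\mathfrak h$, whence $\varphi(gh)=\mathfrak g\mathfrak h$; since $\{1^\ast+g^\ast,\ \mathfrak g,\ \mathfrak h,\ \mathfrak g\mathfrak h\}$ is again a basis of $H_2(q)^\ast$ (its transition matrix from $\mathcal B^\ast$ is block diagonal with two copies of the invertible matrix $\left(\begin{smallmatrix}1&1\\1&-1\end{smallmatrix}\right)$), $\varphi$ is an isomorphism of algebras.

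Next I would verify the symmetry $\varphi=\varphi^\ast$. Recording $\varphi(1)=1^\ast+g^\ast$, $\varphi(g)=1^\ast-g^\ast$, $\varphi(h)=h^\ast+(gh)^\ast$, $\varphi(gh)=h^\ast-(gh)^\ast$, the representation matrix $S$ of $\varphi$ with respect to $\mathcal B$ and $\mathcal B^\ast$ is the block-diagonal matrix with two identical blocks $\left(\begin{smallmatrix}1&1\\1&-1\end{smallmatrix}\right)$, which is manifestly symmetric; so $\varphi=\varphi^\ast$ by the criterion of Section 4.1. Now Proposition \ref{bialgiso} applies: $\varphi$ is an algebra isomorphism with $\varphi=\varphi^\ast$, hence it is simultaneously a coalgebra isomorphism, therefore a bialgebra isomorphism, and (a bialgebra isomorphism between Hopf algebras commutes with the antipodes, since the antipode is the unique convolution inverse of the identity) a Hopf algebra isomorphism. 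Together with $\varphi=\varphi^\ast$, this is exactly the assertion that $H_2(q)$ is symmetrically self-dual.

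The only genuine difficulty is bookkeeping: one must transcribe the coproduct of $H_2(q)$ correctly into the convolution product on $H_2(q)^\ast$ and, in particular, choose the normalization of $\mathfrak h$ (the coefficient of $(gh)^\ast$, which turns out to be $1$) so that \emph{both} the Taft relations in $H_2(q)^\ast$ hold \emph{and} the matrix $S$ comes out symmetric. Since $n=2$, everything lives in dimension $4$ and each verification is a couple of lines; as an alternative to invoking Proposition \ref{bialgiso} one could instead check directly that $\varphi$ intertwines $\Delta$ and $\varepsilon$ on the generators, but this is precisely the computation that Proposition \ref{bialgiso} lets us avoid.
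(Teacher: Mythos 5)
Your proposal is correct and follows essentially the same route as the paper: you construct the very same map $\varphi(1,g,h,gh)=(1^*,g^*,h^*,(gh)^*)S$ with the block-diagonal symmetric matrix $S$ built from two copies of $\left(\begin{smallmatrix}1&1\\1&-1\end{smallmatrix}\right)$, observe that the symmetry of $S$ gives $\varphi=\varphi^*$, and invoke Proposition~\ref{bialgiso} to upgrade the algebra isomorphism to a bialgebra (hence Hopf algebra) isomorphism. The only difference is cosmetic: where the paper asserts that $\varphi$ is "easy to check" to be an algebra isomorphism, you supply that check cleanly by verifying the Taft relations for $\mathfrak g=1^*-g^*$ and $\mathfrak h=h^*+(gh)^*$ in $H_2(q)^*$ and appealing to the presentation of $H_2(q)$.
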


\begin{proof}

For convenience, we describe the multiplication of $H_2(q)$ by the following table:
\[\begin{tabular}{|c|cccc|}
		\hline
&		$1$ & $g$ & $h$ & $gh$ \\\hline
	$1$&	$1$& $g$ & $h$ &$gh$ \\
$g$&		$g$& $1$ & $gh$ &$h$ \\
$h$&		$h$& $-gh$ & $0$ &$0$ \\
$gh$&		$gh$& $-h$ & $0$ &$0$ \\
		\hline
		\end{tabular}\]	
The comultiplications of the basis $\{1,g,h,gh\}$ are 
\begin{align*}
\Delta(1)=1\otimes 1,\quad \Delta(g)=g\otimes g,\quad \Delta(h)=1\otimes h+h\otimes g,\quad \Delta(gh)=g\otimes gh+gh\otimes 1.
\end{align*}
Assume $H^*_2(q)$ is the dual space of $H_2(q)$ with the dual basis $\{1^*,g^*,h^*,(gh)^*\}$. Then through the duality of comultiplication of $H_2(q)$, we can describe the multiplication of $H^*_2(q)$:
\[\begin{tabular}{|c|cccc|}
		\hline
&		$1^*$ & $g^*$ & $h^*$ & $(gh)^*$ \\\hline
	$1^*$&	$1^*$&0  & $h^*$ &0 \\
$g^*$&	0	& $g^*$ & 0 &$(gh)^*$ \\
$h^*$&	0	& $h^*$ &0  &0 \\
$(gh)^*$&		$(gh)^*$& 0 &0  &0 \\
		\hline
		\end{tabular}\]	

Let $S=(s_{ij})_{4\times 4}$ be the following matrix $$\begin{pmatrix}
        1&1  &  & \\
		1& -1 &  & \\
		&  &1  &1 \\
		&  &  1& -1
    \end{pmatrix}.$$
It is easy to check that the assignment $$\varphi(1,g,h, gh)=(1^*, g^*,h^*, (gh)^*)S$$ defines an algebra isomorphism $$\varphi:H_2(q)\longrightarrow H^*_2(q).$$
By Proposition \ref{bialgiso}, $\varphi$ is also an isomorphism of bialgebras. Moreover, the observation $S$ is symmetric implies $\varphi=\varphi^*$. Then the result follows.
\end{proof}

According to Theorem \ref{thm2}, we can associate to $H_2(q)$ an $\imath$Hopf algebra structure, called \textit{$\imath$Taft algebras} and denoted by $H^\imath_2(q)$.

Now we deduce the new multiplication ``$\diamond$'' in the $\imath$Taft algebras $H^\imath_2(q)$. For convenience, denote by $(d_1,d_2,d_3,d_4)=(1,g,h,gh)$. We calculate $d_2\diamond d_3$ and $d_3\diamond d_4$ in detail as follows. 
Since $$\Delta(d_2)=g\otimes g=d_2\otimes d_2, \quad \Delta(d_3)=1\otimes h+h\otimes g=d_1\otimes d_3+d_3\otimes d_2,$$ and $$\Delta(d_4)=g\otimes gh+gh\otimes 1=d_2\otimes d_4+d_4\otimes d_1,$$ we obtain
\begin{align*}
      d_2\diamond d_3=s_{23}d_2\cdot d_1+s_{22}d_2\cdot d_3=-gh=-d_4;\\
\end{align*}
and
\begin{align*}
      d_3\diamond d_4=&s_{14}d_3\cdot d_2+s_{11}d_3\cdot d_4+s_{34}d_2\cdot d_2+s_{13}d_2\cdot d_4\\
      =&d_3\cdot d_4+d_2\cdot d_2=h\cdot gh+g\cdot g=1=d_1.
\end{align*}
One can calculate $d_i\diamond d_j$ for any $1\leq i,j\leq 4$ in a similar manner. We omit the details here and simply conclude the multiplication of $H^\imath_2(q)$ as follows:
\[\begin{tabular}{|c|cccc|}
		\hline
$\diamond$ &		$d_1$ & $d_2$ & $d_3$ & $d_4$ \\\hline
	$d_1$&	$d_1$& $d_2$ & $d_3$ &$d_4$ \\
$d_2$&		$d_2$& $-d_1$ & $-d_4$ &$d_3$ \\
$d_3$&		$d_3$& $-d_4$ & $d_2$ &$d_1$ \\
$d_4$&		$d_4$& $d_3$ & $d_1$ &$-d_2$ \\
		\hline
		\end{tabular}\]	

 Consequently, we have
 \begin{proposition}
    The $\imath$Hopf algebra $H_2^\imath(q)$ is commutative, and it is isomorphic to the group algebra of $\mathbb{Z}/4\mathbb{Z}$ if the base field $\bfk$ is algebraically closed.
 \end{proposition}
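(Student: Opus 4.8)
The plan is to verify commutativity directly from the multiplication table already computed for $H_2^\imath(q)$, and then to identify the algebra by exhibiting an explicit generator of order $4$. For commutativity, one simply observes that the $4\times 4$ table with entries $d_i\diamond d_j$ is symmetric: the only off-diagonal products to check are $d_2\diamond d_3=-d_4=d_3\diamond d_2$, $d_2\diamond d_4=d_3=d_4\diamond d_2$, and $d_3\diamond d_4=d_1=d_4\diamond d_3$, together with the trivial fact that $d_1$ acts as a unit. Since $\{d_1,d_2,d_3,d_4\}$ is a basis of $A^\imath$ and $\diamond$ is bilinear, symmetry of the table is equivalent to commutativity of $(A^\imath,\diamond)$. (One should also note that $d_1=u(1)$ is the unit, consistent with Theorem \ref{thm2}.)

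For the second assertion, I would first note that Theorem \ref{thm2} guarantees $(A^\imath,\diamond)$ is an associative unital algebra, so together with commutativity it is a commutative unital $\bfk$-algebra of dimension $4$. The idea is to find an element $x\in A^\imath$ with $x^{\diamond 4}=d_1$ (the unit) and such that $\{d_1,x,x^{\diamond 2},x^{\diamond 3}\}$ is a basis; then the assignment $x\mapsto$ generator of $\mathbb{Z}/4\mathbb{Z}$ extends to an algebra isomorphism $\bfk[\mathbb{Z}/4\mathbb{Z}]\cong\bfk[t]/(t^4-1)\xrightarrow{\ \sim\ }A^\imath$. From the table, $d_2\diamond d_2=-d_1$, so $d_2$ has order dividing $4$ but the subalgebra generated by $d_2$ is only $2$-dimensional ($\bfk\text{-span}\{d_1,d_2\}$), hence $d_2$ alone is not enough; likewise $d_3\diamond d_3=d_2$, so $d_3$ generates $d_3,d_2,d_3\diamond d_2=-d_4,d_3\diamond d_4 = d_1$ — that is, $d_3$ generates all of $A^\imath$. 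Computing the powers of $d_3$: $d_3^{\diamond 2}=d_2$, $d_3^{\diamond 3}=d_3\diamond d_2=-d_4$, $d_3^{\diamond 4}=d_3\diamond(-d_4)=-d_1$. So $d_3$ has order $8$, not $4$; this shows $A^\imath\cong\bfk[t]/(t^4+1)$, which over an algebraically closed field (where $\bfk$ contains a primitive $8$th root of unity, automatic since it already contains a primitive square root $q$ of $1$... actually $n=2$ gives $q=-1$, so one needs a primitive $8$th root of unity in $\overline{\bfk}$, which exists) splits as $\prod_{\zeta^4=-1}\bfk$. Meanwhile $\bfk[\mathbb{Z}/4\mathbb{Z}]\cong\bfk[t]/(t^4-1)$ also splits as $\prod_{\zeta^4=1}\bfk\cong\bfk^4$. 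Hence both are isomorphic to $\bfk^4$ as algebras, giving $H_2^\imath(q)\cong\bfk[\mathbb{Z}/4\mathbb{Z}]$.

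More cleanly: over an algebraically closed field $\bfk$ (of characteristic not $2$, as assumed for $H_n(q)$), pick $\xi\in\bfk$ with $\xi^4=-1$, i.e.\ a primitive $8$th root of unity. Then $\xi^{-1}d_3$ satisfies $(\xi^{-1}d_3)^{\diamond 4}=\xi^{-4}d_3^{\diamond 4}=(-1)^{-1}(-d_1)=d_1$, and since $d_3$ generates $A^\imath$ so does $\xi^{-1}d_3$; the four elements $d_1,\xi^{-1}d_3,\xi^{-2}d_2,-\xi^{-3}d_4$ form a basis, so $t\mapsto\xi^{-1}d_3$ induces an isomorphism $\bfk[\mathbb{Z}/4\mathbb{Z}]\cong\bfk[t]/(t^4-1)\xrightarrow{\ \sim\ }A^\imath=H_2^\imath(q)$. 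I expect the only subtle point to be the bookkeeping of roots of unity — confirming that an $8$th root of unity is available in $\overline{\bfk}$ under the standing hypotheses (characteristic $\ne 2$), and checking that the chosen generator genuinely has order exactly $4$ after rescaling rather than $8$ — but this is routine once the multiplication table is in hand. Everything else is a direct verification against the table.
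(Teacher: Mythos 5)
Your proposal is correct and follows essentially the same route as the paper: commutativity is read off from the symmetry of the multiplication table, and the identification with $\bfk(\mathbb{Z}/4\mathbb{Z})$ is obtained by rescaling $d_3$ by a fourth root of $-1$ (the paper takes $\sqq d_3$ with $\sqq^4=-1$, which is the same as your $\xi^{-1}d_3$) so that its fourth $\diamond$-power becomes the unit $d_1$ while its powers still span $A^\imath$. The intermediate detour through $\bfk[t]/(t^4+1)\cong\bfk^4\cong\bfk[t]/(t^4-1)$ is a valid alternative justification but is subsumed by your final, cleaner argument, which is exactly the paper's.
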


 \begin{proof}
    Obviously, $H_2^\imath(q)$ is commutative. Since $\bfk$ is algebraically closed, there exists an element $\sqq\in\bfk$ with $\sqq^4=-1$. Then we have \[
     (\sqq d_3)^2=\sqq^2 d_2,\quad (\sqq d_3)^3=-\sqq^3 d_4,\quad (\sqq d_3)^4= d_1=1. 
     \]It immediately follows that $H_2^\imath(q)\cong \bfk \langle \sqq d_3\rangle\cong\bfk (\mathbb{Z}/4\mathbb{Z})$.
 \end{proof}

\noindent{\bf Acknowledgment.}
The authors are grateful to Fan Qin for proposing the research topic of $\imath$Hopf algebras.
This work was partially supported by the Natural Science
Foundation of Xiamen (Grant No. 3502Z20227184),
Fujian Provincial Natural Science Foundation of China (Grant Nos. 2024J010006 and 2022J01034),
the National Natural Science Foundation of China (Grant No. 12271448), and the Fundamental
Research Funds for Central Universities of China (Grant No. 20720220043).


\end{document}